\numberwithin{equation}{section}
\newtheorem{theorem}{Theorem}[section]
\newtheorem{assumption}[theorem]{Assumption}
\newtheorem{definition}[theorem]{Definition}
\newtheorem{lemma}[theorem]{Lemma}
\newtheorem{proposition}[theorem]{Proposition}
\theoremstyle{remark}
\newtheorem{remark}[theorem]{Remark}
\newcommand{\argmin}{\mbox{argmin}}
\newcommand{\R}{\mathds{R}}
\newcommand{\N}{\mathds{N}}
\newcommand{\I}{\mathcal{I}}
\newcommand{\G}{\mathcal{G}}
\newcommand{\E}{\mathbb{E}}
\renewcommand{\O}{\mathcal{O}}
\newcommand{\F}{\mathcal{F}}
\newcommand{\h}{\mathcal{H}}
\newcommand{\X}{\mathcal{X}}
\renewcommand{\P}{\mathcal{P}}
\newcommand{\converges}[1]{ \overset{#1}{\longrightarrow}} 
\newcommand{\M}{\mathcal{M}}
\newcommand{\x}{\mathbf{x}}
\newcommand{\veps}{\varepsilon}
\newcommand{\dkl}{D_{\mbox {\tiny{\rm KL}}}}
\DeclareMathOperator{\esssup}{esssup}
\definecolor{mygreen}{rgb}{0.1,0.75,0.2}
\newcommand{\nc}{\normalcolor}
\title{Continuum Limits of  Posteriors in Graph Bayesian Inverse Problems}
\author{Nicol\'as Garc\'ia Trillos and Daniel Sanz-Alonso} 
\address{Division of Applied Mathematics, Brown University, Providence, RI, 02912, USA.}
\begin{document}
\maketitle

\begin{abstract}
We consider the problem of recovering a function input of a differential equation formulated on an unknown domain $\M$. We assume to have access to a discrete domain $\M_n=\{\x_1, \dots, \x_n\} \subset \M$, and to noisy measurements of the output solution at $p\le n$ of those points.
We introduce a graph-based Bayesian inverse problem, and show that the graph-posterior measures over functions in $\M_n$ converge, in the large $n$ limit, to a posterior over functions in $\M$ that solves a Bayesian inverse problem with known domain.
 The proofs rely on the variational formulation of the Bayesian update, and on a new topology for the study of convergence of measures over functions on point clouds to a measure over functions on the continuum. Our framework, techniques, and results may serve to lay the foundations of robust uncertainty quantification of graph-based tasks in machine learning. The ideas are presented in the concrete setting of recovering the initial condition of the heat equation on an unknown manifold.  

\end{abstract}

\section{Introduction}\label{sec:introduction}
\subsection{Aim and Relevance}
The Bayesian formulation of inverse problems has found numerous applications, notably in learning function inputs for differential equations \cite{AS10}, \cite{dashti2011uncertainty}, \cite{cotter2009bayesian}, \cite{biegler2011large}, \cite{trillos2016bayesian}.  A common, but often unnatural assumption is that the physical domain where the equation is defined is known. In this paper we propose a graph-based Bayesian approach for learning a function input of a differential equation on an unknown domain $\M$. We assume to have only access to a point cloud $\M_n=\{\x_1, \dots, \x_n\} \subset \M$, and to noisy measurements of the output solution at $p\le n$ of those points. The graph-posterior solutions to the discrete Bayesian inverse problems in the point cloud are proved to be close ---in the large $n$ limit--- to a continuum posterior that solves a Bayesian inverse problem on known domain $\M$.  

Our contributions build on, and extend, the growing field of continuum limits of functionals on point clouds \cite{trillos}, \cite{trillosACHA}. The main result, Theorem \ref{main:thm}, shows the convergence of graph-posteriors to the continuum posterior. The proof is based on $\Gamma$-convergence, and exploits two main ideas. First, that the Bayesian update admits a variational formulation \cite{trillossanzflows}: the posterior is characterized as  the minimizer of a functional acting on measures. Second, the introduction of a topology that allows to compare measures over functions on point clouds with measures over functions on the continuum. This topology is naturally defined in terms of the transportation metric introduced in \cite{trillos} for the study of the continuum limit of the total variation functional on point clouds.

Beyond setting forth the formulation of inverse problems on unknown domains, this paper aims to contribute to the theory of \emph{robust} uncertainty quantification of graph-based learning tasks such as clustering, classification, and semi-supervised learning. Several widely used algorithms were traditionally set as an optimization problem over functions, and the paper \cite{trillos} provides a natural framework for studying their continuum limits. These procedures have recently been subject to uncertainty quantification by casting them into a Bayesian formulation \cite{bertozziuncertainty}.  Here we study continuum limits at the level of measures by viewing the Bayesian update as an optimization problem over measures. Thereby we provide the tools for the study of robust uncertainty quantification of graph-based tasks in machine learning and statistics.

The presentation is framed in the concrete setting of the Bayesian inversion of the heat equation. To lay the ground for a more general analysis, we highlight the three building blocks required for the study of continuum limits of posteriors: continuum limit of priors, stability of forward maps, and stability of observation maps. These are established in Theorem \ref{lemma} in our concrete setting. Whereas the continuum limit of priors is readily transferable to other problems, the stability of forward and observation maps will have to be studied case by case. Our analysis makes use of recently developed spectral theory of graph Laplacians over point clouds in manifolds \cite{SpecRatesTrillos}. A more general study of stability of forward and observation maps will require  ---and serve as motivation for--- further study of regularity theory on geometric graphs. This is a much active research area, but still in its infancy.

\subsection{Outline}\label{ssec:outline}
Our setting is described in Section \ref{sec:contandgraphBIP}, where the (continuum) heat Bayesian inverse problem is reviewed, and its graph counterpart is introduced. 
We describe the variational formulation of these problems in Section \ref{sec:preliminaries}. We further introduce a natural topology for studying the convergence of measures over functions in the discrete domain to measures over functions in the continuous domain. In Section \ref{sec:mainresults} we state our main results on the continuum limit of graph-posteriors. Finally, Section \ref{sec:proofs} contains some auxiliary results and the proofs of our main results.

\subsection{Notation}\label{sec:notation}
Throughout, $\M$ denotes an $m$-dimensional, compact, smooth manifold embedded in $\R^d$; we refer to $\M$ as the \textit{continuous domain}; $d_\M$ denotes the geodesic distance in $\M$. We denote by $\gamma$ the uniform probability distribution on $\M$. We let  $\M_n:=\{\x_1, \dots, \x_n \}$ be a collection of i.i.d. samples from $\gamma$ and refer to $\M_n$ as the \textit{discrete domain}, and say that $\M_n$ is a point cloud. We denote by $\gamma_n$ the empirical measure
\[ \gamma_n := \frac{1}{n} \sum_{i=1}^n \delta_{\x_i}.\]
We write $L^2(\gamma)$ and $L^2(\gamma_n)$ to denote, respectively, $L^2(\M, \gamma)$ and $L^2(\M_n, \gamma_n).$ 
Any object (e.g. functions, maps, measures) built in the discrete domain will be denoted with a subscript $n$. We use boldface to denote measures over functions on $\M$ or $\M_n.$

\section{Continuum and Graph Bayesian Inverse Problems}\label{sec:contandgraphBIP}

We consider the inverse problem of recovering the initial condition of the heat equation from noisy observation of the solution at a positive time, which we choose to be $t=1$. The classical continuous setting is reviewed in Subsection \ref{ssec:continuous}, and the graph-based discrete setting is introduced in Subsection \ref{ssec:discrete}.

\subsection{Continuum Bayesian Inverse Problem}\label{ssec:continuous}
We first recall the classical setup where the physical domain $\M$ is known. The Bayesian formulation of inverse problems \cite{AS10} provides a principled way to learn an unknown input (here the initial condition $u$ of the heat equation), based on the three following ingredients:
\begin{enumerate}
\item A {\bf{prior}} distribution $\boldsymbol{\pi}$ on the unknown input $u$. The prior encodes all the available information on the unknown before data has been observed. Here we will assume a Gaussian prior
\begin{equation}\label{eq:prior}
\boldsymbol{\pi} = N\bigl(0,(\alpha I - \Delta)^{-s/2}\bigr),
\end{equation}
where $\alpha\ge 0$ is given, and $s>m$ so that $u\sim \boldsymbol{\pi}$ belongs to $L^2(\gamma)$ almost surely. See Remark \ref{RemL2L8} below for a discussion of our function space setup.
\item A {\bf{forward model}}. We consider the heat equation:
\begin{align}\label{eq:heat}
\begin{split}
  \begin{cases} v_t(x,t) = \Delta v(x,t) , \quad &t >0,\, x\in \M,  \\ v(x,0)= u(x), & x\in \M;   \end{cases}
  \end{split}
\end{align}\label{heat}
Let $\F:L^2(\gamma) \to L^2(\gamma)$ map an input $u\in L^2(\gamma)$ to the solution to \eqref{eq:heat} at time $t=1,$ $v(\cdot,1)\in L^2(\gamma).$ We call $\F$ the \emph{forward map}.
\item {\bf Data} $y$ consisting of some noisily measured output of the forward model. We assume that
\begin{equation}\label{eq:data}
y=[y_1, \ldots, y_p]^T= \bigl[\bar{v}^\delta(\x_1,1), \ldots, \bar{v}^\delta(\x_p,1) \bigr]^T + \eta,  \quad \eta(d\zeta) \propto \exp(-\phi(\zeta)) d\zeta,
\end{equation}
where $\delta>0$ is small and given, $\x_1, \ldots.
,\x_p\in \M$,
$$\bar{v}^\delta(\x_j,1) := \int_{B_\delta( \x_j) \cap \M} v(x,1) \gamma(dx), \quad 1\le j \le p ,$$
is the average of the function $v(\cdot,1)$ over an euclidean ball of radius $\delta>0,$
and $\eta$ represents observation noise with negative log-density $\phi$ with respect to Lebesgue measure in $\R^p.$ For instance, Gaussian observation noise corresponds to quadratic $\phi$. Throughout, $\phi$ is assumed to be continuous and bounded from below.

The map $$\O: L^2(\gamma) \to \R^p \quad \quad  v\in L^2(\gamma) \mapsto \bigl[\bar{v}^\delta(\x_1,1), \ldots, \bar{v}^\delta(\x_p,1) \bigr]^T$$ is called the \emph{observation map}. Denoting $\G := \O \circ \F,$ \eqref{eq:data} can be rewritten as
$$y = \G(u) + \eta. $$
From this expression one readily sees that the negative log-likelihood, representing the distribution of the data conditioned on the unknown $u,$ is given (with a slight abuse of notation) by 
\begin{equation}\label{def:phi}
\phi(u;y) := \phi\bigl(y-\G(u)\bigr).
\end{equation}
\end{enumerate}

The solution to the continuum Bayesian inverse problem is a continuum \emph{posterior} measure $\boldsymbol{\mu} \in \P(L^2(\gamma))$, 
which represents the conditional distribution of $u$ given the data $y.$ It can be represented as a change of measure from the prior
\begin{equation}\label{def:posterior}
\boldsymbol{\mu}(du) \propto \exp\bigl(-\phi(u;y)\bigr) \boldsymbol{\pi}(du),
\end{equation}
where it is tacitly assumed that $ \exp\bigl(-\phi(u;y)\bigr)$ is $\boldsymbol{\pi}$-integrable so that the right-hand side in \eqref{def:posterior} can be normalized into the probability measure $\boldsymbol{\mu}.$

The posterior $\boldsymbol{\mu}$ contains ---in a precise sense \cite{duke}--- all the information on the unknown input $u$ contained in the prior and the data. The measure $\F_\sharp \boldsymbol{\mu}$ contains all the available knowledge and remaining uncertainty on the output solution of the heat equation at time $t=1.$

\subsection{Graph Bayesian Inverse Problem}\label{ssec:discrete}
We now consider the setting of interest in this paper, where the domain $\M$ is unknown. We assume to have access, however, to $n$ uniform samples from $\M$, constituting a point cloud $\M_n:= \{\x_1, \dots, \x_n \}\subset \M.$ We further assume to have access to the same data $y$  as in the continuous setting, where the points $\x_1, \ldots, \x_p$ in \eqref{eq:data} are assumed to correspond to the first $p$ points in $\M_n$. The aim is to construct ---without knowledge of $\M$---  surrogate ``posterior'' measures $\boldsymbol{\mu_n}$ that can be proved to be close to $\boldsymbol{\mu}$ in the large $n$ limit. \nc

In order to carry out this program we endow  $\M_n$ with a graph structure. First, consider the kernel function $K : [0,\infty) \rightarrow [0, \infty)$ defined by
\begin{equation}
K(r):=\begin{cases}1 & \text{ if } r \leq 1, \\ 0 & \text{otherwise.} \end{cases} 
\label{eta}
\end{equation}
Let
\[ \sigma_K := \int_{\R^m} K(|x|)\lvert \langle x , e_1 \rangle \rvert^2 d x = \frac{\alpha_m}{m+2},  \]
where $\alpha_m$ is the volume of the $m$-dimensional unit ball.
For $\veps>0$ we let $K_\veps: [0,\infty) \rightarrow [0, \infty)$ be the rescaled version of $K$ given by
\[ K_\veps(r) := \frac{1}{ \sigma_K n^2 \veps^{ m+2 }}K\left( \frac{r}{\veps} \right).\]
We define the weight $W_{n}(\x_i, \x_j)$ between $\x_i, \x_j \in \M_n$ by
\[ W_n(\x_i,\x_j) := K_{\veps_n}(\lvert  \x_i-\x_j \rvert), \]
for a given choice of parameter $\veps=\veps_n$, where we have made the dependence of the \emph{connectivity rate} $\veps$ on $n$ explicit. In order for our continuum limit results to hold, the connectivity rate needs to decrease with growing $n$ as stated in Assumption \ref{assumption} below.

After specifying the geometric graph $(\M_n, W_n)$, we introduce its  \textit{graph Laplacian}, which in matrix form can be written as
\[ \Delta_n := D_n -W_n,   \]
where $D$ is the degree matrix of the weighted graph, i.e.,  the diagonal matrix with diagonal entries $D_{ii}= \sum_{j=1}^n W_{n}(\x_i, \x_j)$. The above corresponds to the \textit{unnormalized} graph Laplacian, but different normalizations are possible \cite{von2007tutorial}.

We are ready to describe the graph Bayesian inverse problem. Each of the three ingredients (prior, forward map, and observation map) in the continuous setting is replaced by a discrete surrogate:
\begin{enumerate}
\item A {\bf prior} distribution $\boldsymbol{\pi_n}$ on an unknown input $u_n$. We assume
\begin{equation}\label{priordiscrete}
\boldsymbol{\pi_n} = N\bigl(0, (\alpha I_n +  \Delta_n)^{-s}\bigr),
\end{equation}
where $\alpha \ge 0$ and $s>m$ are as in \eqref{eq:prior}. The graph Laplacian, contrary to the regular Laplacian, is positive semi-definite, and hence the change in sign with respect to \eqref{eq:prior}. 
\item A {\bf forward model} 
\begin{align}\label{heatgraph}
\begin{split}
  \begin{cases} D_t v_n = -\Delta_n v_n , \quad t >0,  \\ v_n(0)= u_n.   \end{cases} 
\end{split}
\end{align}
The map $\F_n: L^2(\gamma_n) \to L^2(\gamma_n)$  that maps $u_n\in L^2(\gamma_n)$ to the solution $v_n(1)$ to \eqref{heatgraph} at time $t=1$ will be referred to as the graph forward map.

\item {\bf Data} For the construction of graph-posterior measures we use the data $y$ in \eqref{eq:data}, corresponding to noisily observing the value of the output solution at the first $p$ points of $\M_n$. We define a surrogate observation map
$$\O_n: L^2(\gamma_n) \to \R^p \quad \quad  v\in L^2(\gamma_n) \mapsto \bigl[\bar{v}_{n,1}, \ldots, \bar{v}_{n,p} \bigr]^T,$$
where $\delta >0$ is small and given,
$$\bar{v}_{n,j}^\delta :=\frac1n \sum_{k: \,\x_k\in B_\delta( \x_j) \cap \M_n} [v_n(1)]_k, \quad 1\le j \le p,$$ 
and $[v_n(1)]_k$ denotes the $k$-th entry of the vector $v_n.$
As before, we denote $\G_n :=\O_n \circ \F_n.$


\end{enumerate}
We define the \emph{graph-posterior} measure  $\boldsymbol{\mu_n} \in \P(L^2(\gamma_n))$ by
\begin{equation}\label{def:posteriordiscrete}
\boldsymbol{\mu_n}(du_n) \propto \exp\bigl(-\phi_n(u_n,y) \bigr) \boldsymbol{\pi_n} (du_n),
\end{equation}
where
\begin{equation}\label{def:phin}
\phi_n(u_n,y):= \phi\bigl(y-\G_n(u_n)\bigr).
\end{equation}
\nc

In this paper we show that ---provided that the graph connectivity parameter $\epsilon$ is chosen in terms of $n$ as in Assumption \ref{assumption}--- the graph-posteriors $\boldsymbol{\mu_n}$ in \eqref{def:posteriordiscrete} constitute good approximations in the large $n$ limit to the continuum posterior $\boldsymbol{\mu}$ in \eqref{def:posterior}.

\begin{remark}
Consider the graph-Bayesian inverse problem 
$$y_n = \G_n(u_n) + \eta,$$
where $u_n\sim \boldsymbol{\pi_n} $ and $\eta$ has negative log-density $\phi.$ Given data $y_n$, the posterior of this inverse problem is a measure $\boldsymbol{\mu_n^}{y_n}$ corresponding to the conditional distribution of $u_n$ given $y_n.$ The graph-posterior in \eqref{def:posteriordiscrete} is the measure $\boldsymbol{\mu_n} ^y$ obtained from pluggin in data $y$ obtained from the continuous model.
\end{remark}

\begin{remark}\label{remarketa}
Although for simplicity we focus on the kernel $K$ defined above, we anticipate that all our results are still true for more general kernels. In particular, the proofs of the main results of this paper are still valid if we simply assume that $K$ is a non-negative, compactly supported, and non-increasing function with $K(0)>0$. 
\end{remark}

\begin{remark}
\label{RemL2L8}
Our choice of inverse problem and function space setting is prompted by its simplicity and its power to illustrate the ideas. 
It also serves as motivation for the future study of finer analytical properties of the heat and Poisson equations on geometric graphs. These finer results would extend the regularity theory beyond the usual $L^2$ setting, and would potentially make possible to deal with pointwise observations rather than the averaged data considered here. These questions are the subject of current investigation.  
\end{remark}

\begin{remark}
In this paper we restrict our attention to the prior $\boldsymbol{\pi_n}$ and the forward map $\mathcal{F}_n$ defined using the full spectrum of the graph Laplacian $\Delta_n$. A close look to the proofs of our results suggests considering versions of graph priors and graph forward maps that ignore high frequencies, thus taking advantage of the reduced intrinsic dimension of the continuous Bayesian inverse problem \cite{agapiou2015importance} caused by the assumed regularity of the prior and the smoothing effect of the heat equation. Working with those modified priors and forward maps would simplify our proofs as in that case we would not have to consider what happens with the tails of the spectra. 
We would like to emphasize that ultimately $\boldsymbol{\pi_n}$ and $\mathcal{F}_n$ are user chosen and can be picked in any convenient way that guarantees the convergence towards the continuum limit. 
\end{remark}

\section{Variational Formulation and Topology}\label{sec:preliminaries}
In this section we introduce the two main ideas that will enable us to establish the continuum limits of graph-posteriors via $\Gamma$-convergence. First, in Subsection \ref{ssec:variationalformulation} we formulate the Bayesian update variationally, characterizing the posteriors as minimizers of certain functionals over measures. Second, the topology in which the consistency will be studied is introduced in Subsection \ref{ssec:topology}. Finally, we provide a brief background on $\Gamma$-convergence in Subsection \ref{ssec:gamma}.

\subsection{The Variational Formulation}\label{ssec:variationalformulation}
Recall that the Kullback-Leibler divergence (relative entropy) between two probability measures $\mathbb{P},\mathbb{Q}$ defined on a common measurable space $(\Omega, \mathfrak{F})$ is given by
\begin{equation*}\label{kldefinition}
 \dkl(\mathbb{P} \| \mathbb{Q}) := \begin{cases}  \int_{\Omega}  \frac{d \mathbb{P}}{d \mathbb{Q}}(u) \log\left( \frac{d\mathbb{P} }{d \mathbb{Q}}(u) \right)d \mathbb{Q}(u),  &  \mathbb{P} \ll \mathbb{Q},  \\ \infty, & \text{otherwise}.   \end{cases}  
 \end{equation*}

Then, as discussed in \cite{trillossanzflows}, the  posterior measure $\boldsymbol{\mu}$ in \eqref{def:posterior} can be characterized as the minimizer of the functional
\begin{equation}\label{def:j}
 J(\boldsymbol{\nu}) := \dkl(\boldsymbol{\nu} \| \boldsymbol{\pi}) + \int_{L^2(\gamma)}\phi\bigl(u ; y \bigr)\, d \boldsymbol{\nu} (u) , \quad \boldsymbol{\nu} \in \mathcal{P}\bigl(L^2(\gamma)\bigr),
\end{equation}
where, recall, $\boldsymbol{\pi}$ is the prior measure and $\phi(u;y)$ is the negative log-likelihood, given by \eqref{def:phi}.

Likewise, at the discrete level the graph-posterior $
\boldsymbol{\mu_n}$ can be characterized as the minimizer of 
\begin{equation}\label{def:jn}
  J_n(\boldsymbol{\nu_n}) := \dkl(\boldsymbol{\nu_n} \| \boldsymbol{\pi_n}) + \int_{L^2(\gamma_n)}\phi_n(u_n ; y)\, d \boldsymbol{\nu_n} (u_n) , \quad \boldsymbol{\nu_n} \in \mathcal{P}\bigl(L^2(\gamma_n)\bigr), 
\end{equation}
where $\boldsymbol{\pi_n}$ is the prior, and $\phi_n(u_n;y)$ is given by \eqref{def:phin}.

The variational characterization of posteriors
\begin{equation}\label{def:posteriorsvar}
\boldsymbol{\mu} = \argmin_{\boldsymbol{\nu} \in \P\bigl(L^2(\gamma)\bigr)} J(\boldsymbol{\nu}), \quad \boldsymbol{\mu_n} = \argmin_{\boldsymbol{\nu_n} \in \P(L^2\bigl(\M_n)\bigr)} J_n(\boldsymbol{\nu_n}),
\end{equation}
will enable us to study the convergence of $\boldsymbol{\mu_n}$ to $\boldsymbol{\mu}$ by studying the $\Gamma$-convergence of the functionals $J_n$ to $J.$ A brief review on $\Gamma$-convergence is given in Subsection \ref{ssec:gamma}. A key ingredient in the study of $\Gamma$-convergence is the choice of a suitable topology. We introduce such topology in the next subsection. We close the current one a remark.

\begin{remark} 
Since the function $t \in [0, \infty) \mapsto t \log(t) $ is strictly convex, the minimizers of $J_n$ and $J$ are unique.
\label{remarkuniqueness}
\end{remark}


\subsection{Topology}\label{ssec:topology}
The paper \cite{trillos} introduced a natural topology for the study of the continuum limit of the graph total variation (a functional acting on functions on a point cloud) to the total variation functional (a functional acting on functions in the continuum). Building on this topology ---that we recall in Subsection \ref{sssec:tl2}--- we introduce in \ref{sssec:ptl2} a notion of convergence of \emph{measures} over functions in the point cloud to  \emph{measures} over functions in the continuum.
This new topology will be used to establish the $\Gamma$- convergence of the functionals $J_n$ in \ref{def:jn} (functionals acting on measures over functions on a point cloud) to the functional $J$ in \ref{def:j} (a functional acting on measures over functions in the continuum). 

\subsubsection{The Space $TL^2$}\label{sssec:tl2}
Consider the set 
\[ TL^2 := \bigl\{ (\theta, f) \; : \:  \theta \in \mathcal P(\M), \, f \in L^p(\M, \theta) \bigr\}. \]
For $(\theta_1,f_1)$ and $(\theta_2,f_2)$ in $TL^p$ we define, following \cite{trillos},  
\begin{align} \label{tlpmetric}
\begin{split}
 d_{TL^2}\bigl((\theta_1,f_1), (\theta_2,f_2)\bigr):=
   \inf_{\omega \in \Gamma(\theta_1, \theta_2)} \left(  \iint_{\M \times \M} \Bigl( d_\M(x,y)^2 + |f_1(x)-f_2(y)|^2  \Bigr) d\omega(x,y)  \right)^{1/2},
\end{split}
\end{align}
where $\Gamma(\theta_1,\theta_2)$ is the set of all Borel probability measures on $\M\times \M$ with marginal $\theta_1$ on the first factor and $\theta_2$ on the second one. It was shown in \cite{trillos} that $d_{TL^2}$ defines a distance in $TL^2$.

\begin{remark}
\label{CompletionTL2}
As noted in \cite{trillos},  $(TL^2, d_{TL^2})$ is not a complete metric space. Its completion, denoted $\overline{TL^2}$, can be identified to be the space $\mathcal{P}_2(\M \times \R)$ of Borel probability measures on the product space $\M \times \R$ with finite second moments, endowed with the Wasserstein distance. The space $\overline{TL^2}$ is a Polish space.  
\end{remark}

We will say, with a slight abuse of notation, that a sequence $u_n \in L^2(\gamma_n)$ converges in $TL^2$ to $u\in L^2(\gamma),$ written
$$u_n \converges{TL^2} u,$$
 if $d_{TL^2}\bigl((\gamma_n,u_n), (\gamma,u)\bigr) \to 0.$  A characterization of convergence in $TL^2$ in terms of composition with transport maps can be found in Proposition 3.12 in \cite{trillos}.

\subsubsection{The Space $\P(TL^2)$}\label{sssec:ptl2}
Let $\theta \in \mathcal{P}(\M)$ and  $\boldsymbol{\nu} \in \mathcal{P}(L^2(\theta))$. We note that $\boldsymbol{\nu}$  can be thought naturally as a measure in $\mathcal{P}(TL^2)$. Indeed, the canonical inclusion
\[ \mathcal{I}_\theta:  f \in  L^2(\theta) \longmapsto   (\theta, f) \in TL^2 \]
induces the canonical inclusion
\[ \mathcal{I}_{\theta \sharp}:  \mathcal{P}(L^2(\theta)) \hookrightarrow \mathcal{P}(TL^2), \]
where $\mathcal{I}_{\theta \sharp}$ is the push-forward via $\mathcal{I}_{\theta}$.  Notice that $\mathcal{I}_\theta$ is a continuous map. In the sequel we may drop the explicit mention to $\mathcal{I}$ whenever no confusion rises from doing so.

The above observation motivates the following definition. 
\begin{definition}
\label{convmeasures:defn}
For  $\boldsymbol{\nu_n} \in \mathcal{P}(L^2(\gamma_n)),$ $n\in \N,$ and   $\boldsymbol{\nu} \in \mathcal{P}(L^2(\gamma))$ we say that $\{\boldsymbol{\nu_n}\}_{n\in \N}$ converges to $\boldsymbol{\nu}$, written
 $$\boldsymbol{\nu_n}\converges{\P(TL^2)}{ \boldsymbol{\nu}},$$
  if $\{I_{\gamma _n \sharp} \boldsymbol{\nu_n}\}_{n\in \N}$ converges weakly to $ I_{\gamma \sharp}  \boldsymbol{\nu}$ in $\mathcal{P}(TL^2).$
\end{definition}

The following result is analogous to Lemma 2.7 in \cite{trillosACHA}.

\begin{proposition}(Stability of composition)
Suppose that  
\begin{enumerate}
\item $\boldsymbol{\nu_n}\converges{\P(TL^2)}{ \boldsymbol{\nu}}$.
\item The maps $\mathcal{H}_n : L^2(\gamma_n) \rightarrow L^2(\gamma_n)$ converge in $TL^2$ towards $\mathcal{H} : L^2(\gamma) \rightarrow L^2(\gamma)$.
\end{enumerate}
Then, 
\[   \mathcal{H}_{n \sharp} \boldsymbol{\nu_n} \converges{\P(TL^2)}{ \mathcal{H}_{\sharp} \boldsymbol{\nu}.}   \]
\label{compos:prop}
\begin{proof}
By Skorohod's theorem there exists a probability space $(\widetilde{\Omega}, \widetilde{\mathfrak{F}}, \widetilde{\mathbb{P}})$ where one can define $TL^2$-valued random variables $\{X_n\}_{n\in\N}$ and $X$ with distributions  $\{\boldsymbol{\nu_n}\}_{n\in\N}$ and $\boldsymbol{\nu}$, respectively,  such that, for $\tilde{\mathbb{P}}$ almost all $\tilde{\omega},$ $$X_n(\tilde{\omega}) \to X(\tilde{\omega}).$$
The assumed convergence of $\h_n$ to $\h$ implies that, for $\tilde{\mathbb{P}}$ almost all $\tilde{\omega},$
$$\h_n\bigl(X_n(\tilde{\omega})\bigr) \to \h \bigl( X(\tilde{\omega})\bigr).$$
Since $\h_n(X_n)\sim \h_{n\sharp}\boldsymbol{\nu_n}$ and $\h(X) \sim \h_\sharp \boldsymbol{\nu}$ the desired convergence follows.
\end{proof}
\end{proposition}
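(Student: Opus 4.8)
The plan is to reduce the desired weak convergence of the pushforward measures to an almost-sure statement along a common probability space, via Skorohod's representation theorem, and then transfer the convergence of the maps pathwise.

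First I would pin down the two hypotheses. By Definition \ref{convmeasures:defn}, the assumption $\boldsymbol{\nu_n}\converges{\P(TL^2)}\boldsymbol{\nu}$ means that $I_{\gamma_n\sharp}\boldsymbol{\nu_n}\to I_{\gamma\sharp}\boldsymbol{\nu}$ weakly in $\mathcal{P}(TL^2)$. Since $TL^2$ is not complete, I would regard these as measures on the Polish completion $\overline{TL^2}=\mathcal{P}_2(\M\times\R)$ recalled in Remark \ref{CompletionTL2}, where Skorohod's theorem is applicable. The convergence of the maps $\mathcal{H}_n\to\mathcal{H}$ in $TL^2$ I take in the sense that whenever $u_n\converges{TL^2}u$ one has $\mathcal{H}_n(u_n)\converges{TL^2}\mathcal{H}(u)$, matching the analogous statement (Lemma 2.7 in \cite{trillosACHA}).

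Next I would apply Skorohod's representation theorem on $\overline{TL^2}$ to obtain a probability space carrying $TL^2$-valued random variables $X_n$ of law $\boldsymbol{\nu_n}$ (identified through $I_{\gamma_n}$) and $X$ of law $\boldsymbol{\nu}$ with $X_n\to X$ almost surely. Because the limit $X$ takes values in $TL^2$ (its law is $I_{\gamma\sharp}\boldsymbol{\nu}$), convergence in $\overline{TL^2}$ along these paths coincides with convergence in $TL^2$, so $X_n(\tilde\omega)\converges{TL^2}X(\tilde\omega)$ for $\tilde{\mathbb{P}}$-a.e.\ $\tilde\omega$. The hypothesis on the maps then applies pathwise, giving $\mathcal{H}_n(X_n(\tilde\omega))\converges{TL^2}\mathcal{H}(X(\tilde\omega))$ almost surely. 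Finally I would push this back to laws: $\mathcal{H}_n(X_n)$ has law $\mathcal{H}_{n\sharp}\boldsymbol{\nu_n}$ and $\mathcal{H}(X)$ has law $\mathcal{H}_\sharp\boldsymbol{\nu}$ after the canonical identifications, and almost-sure convergence forces weak convergence of the laws, yielding the claim.

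The main obstacle is the handling of the non-complete space $TL^2$: one must justify that Skorohod's theorem, which requires a Polish space, can legitimately be invoked on $\overline{TL^2}$, and that the resulting almost-sure convergence in $\overline{TL^2}$ descends to genuine $TL^2$-convergence precisely because the limiting random variable $X$ is $TL^2$-valued. A secondary technical point is the measurability of the maps $\mathcal{H}_n$ together with the verification that the canonical inclusions intertwine the pushforwards correctly, namely $I_{\gamma_n\sharp}(\mathcal{H}_{n\sharp}\boldsymbol{\nu_n})=(I_{\gamma_n}\circ\mathcal{H}_n)_\sharp\boldsymbol{\nu_n}$, so that the law of $\mathcal{H}_n(X_n)$ is indeed the object whose convergence we are after.
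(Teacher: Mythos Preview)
Your proposal is correct and follows essentially the same route as the paper: apply Skorohod's representation theorem to realize $\boldsymbol{\nu_n}$ and $\boldsymbol{\nu}$ as almost-surely convergent $TL^2$-valued random variables, use the assumed $TL^2$-convergence of the maps pathwise, and conclude weak convergence of the pushforward laws. In fact you are more careful than the paper on one point: the paper invokes Skorohod directly on $TL^2$ without mentioning completeness, whereas you correctly pass to the Polish completion $\overline{TL^2}$ and observe that, because the limit $X$ is $TL^2$-valued, almost-sure convergence in $\overline{TL^2}$ descends to $TL^2$-convergence.
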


We finish this subsection with a remark.

\begin{remark}
Since $TL^2$ is dense in $\overline{TL^2}$ it follows that the weak convergence in $\mathcal{P}(TL^2)$ generates the same topology as the restriction of weak convergence in $\mathcal{P}(\overline{TL^2})$ to $\mathcal{P}(TL^2)$. This simple observation will be useful in the sequel; for example, we may obtain pre-compactness of sequences in $\mathcal{P}(TL^2)$ by first using Prokhorov's theorem to obtain pre-compactness in $\mathcal{P}(\overline{TL^2})$ and then showing that cluster points necessarily are in $\mathcal{P}(TL^2)$.  
\end{remark}

\subsection{Gamma Convergence}\label{ssec:gamma}
In this subsection we recall the notion of $\Gamma$-convergence in the general setting of a metric space $(\X, d_\X)$. 

\begin{definition}
\label{GamaConv:defn}
We say that a sequence of functionals $E_n: \X\rightarrow [0,\infty]$ $\Gamma$-converges to the functional $E: \X \rightarrow [0,\infty]$ if the following three properties are satisfied:
\begin{enumerate}
\item Liminf inequality: For every $x\in \X$ and every sequence $\{x_n\}_{n\in \N}$ converging to $x$,
$$\liminf_{n\to\infty} E_n(x_n) \ge E(x).$$
\item Limsup inequality: For every $x\in \X$ there exists a sequence $\{x_n\}_{n\in \N}$ converging to $x$ satisfying 
$$\limsup_{n\to \infty} E_n(x_n) \le E(x).$$
\item Compactness property: Given $\{n_k\}_{k\in \N}$ an increasing sequence of natural numbers and $\{x_k\}_{k\in \N}$ a bounded sequence in $\X$ for which 
$$\sup_{k\in \N} E_{n_k} (x_k) <\infty$$
$\{x_k\}_{k\in \N}$ is relatively compact in $\X.$
\end{enumerate}
\end{definition}

\begin{remark}
\label{DenseSet}
We say that a sequence $\{ x_n \}_{\in \N}$ is a recovery sequence for $x$ if it satisfies the conditions in the definition of the limsup inequality. We remark that in order to obtain a recovery sequence for all $x \in \X$ it is enough to obtain a recovery sequence for all elements in a subset $\tilde{\X}\subset \X$ satisfying the following property: For every $x \in \X$ there exists a sequence $\{ x_k \}_{k \in \N} \subseteq \tilde{\X}$ converging towards $x$ and for which $E(x_k) \to E(x)$. Such set $\tilde{\X}$ is said to be dense in $\X$ with respect to $E$. See \cite{dal2012introduction}.
\end{remark}

Traditionally, only the first two properties are included in the definition of $\Gamma$-convergence, the third one usually referred to as compactness property or coercivity. With our definition of $\Gamma$-convergence we can succinctly state the following.

\begin{proposition} (Stability of minimizers) 
Let $\{E_n\}_{n\in \N}$ be a sequence of functionals that are not identically equal to $\infty$ and $\Gamma$-converges towards the functional $E$, which is not identically equal to $\infty.$ Then
$$\lim_{n\to\infty} \inf_{x\in \X} E_n(x) = \min_{x\in \X} E(x).$$
Furthermore every bounded sequence $\{x_n\}_{n\in \N}$ in $\X$ for which 
$$\lim_{n\to\infty}\Bigl( E_n(x_n) - \inf_{x\in \X} E_n(x) \Bigr) = 0$$
is relatively compact and each of its cluster points is a minimizer of $E.$ 
In particular, if $E$ has a unique minimizer, then a sequence $\{x_n\}_{n\in \N}$ satisfying converges to the unique minimizer of $E.$
\label{GammaConv}
\end{proposition}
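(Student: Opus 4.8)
The plan is to run the classical argument behind the fundamental theorem of $\Gamma$-convergence, letting the three properties of Definition \ref{GamaConv:defn} do all the work. Write $m_n := \inf_{x\in\X} E_n(x)$ and $m := \inf_{x\in\X} E(x)$. Since neither $E_n$ nor $E$ is identically $\infty$ and both take values in $[0,\infty]$, all these infima lie in $[0,\infty)$. First I would establish the easy upper bound $\limsup_n m_n \le m$: fix any $x\in\X$ with $E(x)<\infty$ and let $\{x_n\}$ be a recovery sequence for $x$ furnished by the limsup inequality, so $x_n\to x$ and $\limsup_n E_n(x_n)\le E(x)$. Since $m_n\le E_n(x_n)$ this gives $\limsup_n m_n\le E(x)$, and taking the infimum over $x$ yields $\limsup_n m_n\le m$. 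In particular $\{m_n\}$ is bounded, which is all I will need about it later.

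Next comes the lower bound together with attainment, and this is where the compactness property enters. I would pass to a subsequence $\{n_k\}$ along which $m_{n_k}\to\liminf_n m_n$ and choose near-minimizers $x_k$ with $E_{n_k}(x_k)\le m_{n_k}+1/k$. These satisfy $\sup_k E_{n_k}(x_k)<\infty$, so, $\{x_k\}$ being bounded, the compactness property makes it relatively compact; let $x_\ast$ be a cluster point, say $x_{k_j}\to x_\ast$. The one bookkeeping point is that the liminf inequality is stated for full sequences, whereas I need it along this subsubsequence. This transfer is harmless: interpolating the values $x_{k_j}$ with the constant value $x_\ast$ on the remaining indices produces a full sequence $z_m\to x_\ast$, and since the liminf of a sequence never exceeds the liminf of any of its subsequences, the liminf inequality $E(x_\ast)\le\liminf_m E_m(z_m)$ gives $E(x_\ast)\le\liminf_j E_{n_{k_j}}(x_{k_j})\le\liminf_n m_n$. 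Combining with $m\le E(x_\ast)$ and the upper bound $\limsup_n m_n\le m$ forces every inequality to be an equality: $m_n\to m$ and the value $m$ is attained at $x_\ast$, so $\min_{x\in\X}E(x)=m=\lim_n m_n$.

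The ``furthermore'' and ``in particular'' clauses then follow by exactly the same mechanism. Given any bounded sequence $\{x_n\}$ with $E_n(x_n)-m_n\to 0$, boundedness of $\{m_n\}$ yields $\sup_n E_n(x_n)<\infty$, so the compactness property gives relative compactness; for any cluster point $x_\ast$ the subsequential liminf inequality above gives $E(x_\ast)\le\liminf_k E_{n_k}(x_{n_k})=\lim_n m_n=m$, while $E(x_\ast)\ge m$, whence $x_\ast$ minimizes $E$. Finally, if $E$ has a unique minimizer $x^\dagger$, then such a sequence is relatively compact with $x^\dagger$ as its only possible cluster point, and a relatively compact sequence in a metric space with a single cluster point converges to it, so $x_n\to x^\dagger$.

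The step I expect to require the most care is not the subsequence bookkeeping (which is routine once one fills the gaps with a constant sequence) but rather \emph{securing boundedness} of the minimizing and almost-minimizing sequences, since the compactness property is only available for bounded inputs. This is genuinely needed: without it the first conclusion can fail, as one sees by taking $E_n$ whose near-minimizers escape to infinity while the $\Gamma$-limit is the constant $1$. In the abstract statement here the relevant sequences are assumed bounded, and in the application the compactness property will be verified precisely on the families of measures that arise, so the hypothesis is exactly what makes the argument go through.
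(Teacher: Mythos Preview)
The paper does not supply a proof of this proposition; it is quoted as background (the standard ``fundamental theorem of $\Gamma$-convergence'', cf.\ the reference \cite{dal2012introduction}), so there is no paper argument to compare yours against. Your argument is the classical one and is correctly carried out, including the subsequence bookkeeping for the liminf inequality.

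Your closing remark about boundedness is well taken and worth making explicit: the first conclusion $\lim_n m_n=\min_\X E$ does rely on being able to extract a \emph{bounded} sequence of near-minimizers so that clause~(3) of Definition~\ref{GamaConv:defn} applies, and the abstract hypotheses do not furnish this. In the paper's application, however, this is a non-issue: $\X=\mathcal{P}(TL^2)$ is metrized by the L\'evy--Prokhorov distance, which is bounded (by $1$), so every sequence in $\X$ is automatically bounded. This is why the verification of the compactness property in the proof of Theorem~\ref{main:thm} never invokes boundedness. So your proof is correct as a proof of the proposition \emph{in the setting where it is used}, and your caveat accurately flags the one place where the abstract statement, read literally, is slightly stronger than what the three clauses of Definition~\ref{GamaConv:defn} alone deliver.
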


\begin{remark}
In this paper we use the notion of $\Gamma$-convergence in the context of $\mathcal{X}=\P(TL^2)$ \nc with the topology of weak convergence of probability measures. Notice that the Levy-Prokhorov metric metrizes the weak convergence of probability measures. 
\end{remark}

\section{Main Results}\label{sec:mainresults}

We will make the following assumption on the parameter $s$ that characterizes the regularity of draws from the prior $\boldsymbol{\pi}$ and  on the connectivity rate $\veps_n$ of the graph $(\M_n, W_n).$ Recall that $m$ denotes the dimension of the manifold $\M.$

\begin{assumption}\label{assumption}
It holds that $s>2m$. The connectivity rate $\veps_n$ satisfies
\[ \frac{(\log(n))^{p_m} }{n^{1/m}}   \ll \veps_n \ll \frac{1}{n^{1/s}}  , \quad \text{ as } n \rightarrow \infty,\]
where  $p_m =3/4 $ for $m =2$ and $p_m= 1/m$ for $m \geq 3$. 
\end{assumption}
The lower bound also appears in \cite{trillos} and \cite{trillosACHA}. It is related to the $\infty$-OT distance between $\gamma_n$ and $\gamma$. Indeed, with probability one, one can find transport maps $T_n: \M \rightarrow \M_n$ with $T_{n \sharp}\gamma = \gamma_n $ such that
\begin{equation}
 t_n := \esssup_{x \in \M} d_{\M}(x , T_n(x))   
\label{Tn}
\end{equation}
scales like $\frac{(\log(n))^{p_m}}{n^{1/m}}$ as $n \rightarrow \infty$. This is the best scaling achievable (see \cite{SpecRatesTrillos} for the result on manifolds or \cite{trillos2014canadian} for the result on Euclidean domains).   On the other hand, the upper bound on $\veps_n$ is assumed so as to guarantee that higher frequencies of $\Delta_n$ do not accumulate in the large $n$ limit. We notice that the condition $s>m$ is sufficient for defining the Gaussian measure $\boldsymbol{\pi}$ on $\mathcal{P}(L^2(\gamma))$. However, our proof of consistency of priors in Theorem \ref{lemma} below requires the stronger condition $s>2m.$

Our first main result establishes the continuum limit of graph Gaussian priors \ref{priordiscrete} to the continuum Gaussian prior  in the continuum \ref{eq:prior}. Moreover it shows the stability of the forward and observation maps defined in Section \ref{sec:contandgraphBIP}.


\begin{theorem}\label{lemma}
Suppose that Assumption \ref{assumption} holds. Let $\boldsymbol{\pi_n}$, $\boldsymbol{\pi}$, $\F_n$, $\F$, $\O_n$, and $\O$ be priors, forward maps, and observation maps, as defined in Section \ref{sec:contandgraphBIP}. 
Then,
\begin{enumerate}
\item (Continuum limit of priors.)  $\boldsymbol{\pi_n} \converges{P(TL^2)} \boldsymbol{\pi}.$
\item (Stability of forward maps.) If $u_n \converges{TL^2} u$, then $\F_n(u_n) \converges{TL^2} \F(u).$
\item (Stability of observations maps.) If $v_n \converges{TL^2}{v},$ then $\O_n(v_n) \converges{}{ \O(v)}.$
\end{enumerate}
\end{theorem}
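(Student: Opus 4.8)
The plan is to treat all three assertions with a single engine, namely the spectral convergence of the graph Laplacian $\Delta_n$ to the Laplace--Beltrami operator, which under Assumption \ref{assumption} is supplied by \cite{SpecRatesTrillos}. Write $0 = \lambda_1 \le \lambda_2 \le \cdots$ for the eigenvalues of $-\Delta$ with $L^2(\gamma)$-orthonormal eigenfunctions $\psi_k$, and $0 = \lambda_1^n \le \cdots \le \lambda_n^n$ for those of $\Delta_n$ with $L^2(\gamma_n)$-orthonormal eigenvectors $\psi_k^n$. The first thing I would record from \cite{SpecRatesTrillos} is that, with probability one, for each fixed $k$ one has $\lambda_k^n \to \lambda_k$ and (after a choice of sign) $\psi_k^n \converges{TL^2} \psi_k$. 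Throughout I would work with the transport maps $T_n$ of \eqref{Tn}, so that by Proposition 3.12 in \cite{trillos} a sequence $f_n \in L^2(\gamma_n)$ satisfies $f_n \converges{TL^2} f$ precisely when $f_n \circ T_n \to f$ in $L^2(\gamma)$. This dictionary turns $TL^2$ statements into genuine $L^2(\gamma)$ statements; in particular, if $f_n \converges{TL^2} f$ and $g_n \converges{TL^2} g$ with both sequences bounded in $L^2$, then $\langle f_n, g_n \rangle_{L^2(\gamma_n)} = \langle f_n \circ T_n, g_n \circ T_n \rangle_{L^2(\gamma)} \to \langle f, g \rangle_{L^2(\gamma)}$, a fact I will use repeatedly.

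For (1) I would argue by coupling. On a probability space independent of the point cloud, place i.i.d.\ $N(0,1)$ variables $\{\xi_k\}$ and form the Karhunen--Lo\`eve expansions $u = \sum_k \sigma_k \xi_k \psi_k$ and $u_n = \sum_k \sigma_k^n \xi_k \psi_k^n$, where $\sigma_k^2$ and $(\sigma_k^n)^2$ are the eigenvalues of the prior covariances, obtained by applying the same spectral function to $\lambda_k$ and $\lambda_k^n$. These are $TL^2$-valued random variables with laws $\mathcal{I}_{\gamma_n \sharp}\boldsymbol{\pi_n}$ and $\mathcal{I}_{\gamma \sharp}\boldsymbol{\pi}$, so by Definition \ref{convmeasures:defn} it suffices to prove weak convergence of their laws, which (by testing against bounded continuous functions and dominated convergence) follows once I show $u_n \converges{TL^2} u$ almost surely. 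I would split $u_n$ into its first $K$ modes and a tail: the finite part converges in $TL^2$ because finite linear combinations are stable under $TL^2$ convergence (each $\psi_k^n \converges{TL^2}\psi_k$ and $\sigma_k^n \to \sigma_k$), while the tail is measured by $\bigl\| \sum_{k > K} \sigma_k^n \xi_k \psi_k^n \bigr\|_{L^2(\gamma_n)}^2 = \sum_{k>K} (\sigma_k^n)^2 \xi_k^2$, whose expectation over $\xi$ is $\sum_{k>K}(\sigma_k^n)^2$.

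The main obstacle is the uniform tail bound $\sup_n \sum_{k>K}(\sigma_k^n)^2 \to 0$ as $K \to \infty$; this is exactly where the strengthened hypothesis $s > 2m$ and the upper bound $\veps_n \ll n^{-1/s}$ enter, the latter preventing the high frequencies of $\Delta_n$ from accumulating and the former providing enough decay of the covariance spectrum. I would deduce the uniform bound from two ingredients: the pointwise convergence $(\sigma_k^n)^2 \to \sigma_k^2$ already noted, together with convergence of the total masses $\sum_k (\sigma_k^n)^2 = \trace(C_n) \to \trace(C) = \sum_k \sigma_k^2$, where $C_n, C$ are the prior covariances. Given these, a Scheff\'e-type argument for non-negative series --- dominated convergence applied to $(\sigma_k^2 - (\sigma_k^n)^2)_+ \le \sigma_k^2$ --- yields $\sum_k |(\sigma_k^n)^2 - \sigma_k^2| \to 0$ and hence the uniform tail estimate. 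Establishing the trace convergence, which involves the entire rather than merely the low-frequency spectrum of $\Delta_n$, is the genuinely delicate step and the one I expect to lean most heavily on the estimates of \cite{SpecRatesTrillos}.

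Statements (2) and (3) are comparatively routine given this machinery. For (2) I would use the spectral representations $\F(u) = \sum_k e^{-\lambda_k}\langle u, \psi_k\rangle_{L^2(\gamma)}\psi_k$ and $\F_n(u_n) = \sum_k e^{-\lambda_k^n}\langle u_n, \psi_k^n\rangle_{L^2(\gamma_n)}\psi_k^n$, and again split into the first $K$ modes and a tail. The finite part converges in $TL^2$ from $\lambda_k^n \to \lambda_k$, $\psi_k^n \converges{TL^2}\psi_k$, and the inner-product convergence recorded above; the tail is handled by heat smoothing alone, since monotonicity of the eigenvalues gives $\bigl\| \sum_{k>K} e^{-\lambda_k^n}\langle u_n,\psi_k^n\rangle \psi_k^n \bigr\|_{L^2(\gamma_n)}^2 \le e^{-2\lambda_{K+1}^n}\|u_n\|_{L^2(\gamma_n)}^2$, with $\|u_n\|_{L^2(\gamma_n)}$ bounded and $\lambda_{K+1}^n$ bounded below (for $n$ large) by essentially $\lambda_{K+1} \to \infty$. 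For (3), each coordinate of the observation maps is an inner product against the indicator of a ball, $\O(v)_j = \langle v, \charf_{B_\delta(\x_j)\cap \M}\rangle_{L^2(\gamma)}$ and $\O_n(v_n)_j = \langle v_n, \charf_{B_\delta(\x_j)\cap \M_n}\rangle_{L^2(\gamma_n)}$. I would show $\charf_{B_\delta(\x_j)\cap \M_n} \converges{TL^2} \charf_{B_\delta(\x_j)\cap \M}$ --- after transport by $T_n$ these indicators disagree only on a neighborhood of the sphere $\partial B_\delta(\x_j)\cap \M$ of width $t_n \to 0$, whose $\gamma$-measure vanishes --- and then conclude by the inner-product convergence. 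The only additional input is the mild regularity fact that this boundary sphere is $\gamma$-null with shrinking tubular neighborhoods, which holds for the uniform measure on a smooth manifold.
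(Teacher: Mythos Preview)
Your arguments for parts (2) and (3) are essentially identical to the paper's: spectral truncation plus heat smoothing for the forward map, and $TL^2$ convergence of ball indicators plus continuity of inner products for the observation map.

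For part (1) you start from the same coupling via Karhunen--Lo\`eve expansions, but you diverge from the paper in how you control the tail. You reduce everything to the single statement that $\trace(C_n)\to\trace(C)$, and then invoke a Scheff\'e argument. This reduction is correct, but you explicitly leave the trace convergence unproved, saying only that you ``expect to lean on \cite{SpecRatesTrillos}''. That is the whole content of the result: the cited spectral estimates cover only the low part of the spectrum, $i\le k_n$ with $k_n\le c\,\veps_n^{-m}$, so something must be said about the remaining $n-k_n$ eigenvalues, and you have not said it. The paper does not argue via trace convergence at all; it truncates at an $n$-dependent level $k_n$ chosen with $n^{m/s}\ll k_n\ll c\,\veps_n^{-m}$ (which is exactly what the upper bound $\veps_n\ll n^{-1/s}$ in Assumption \ref{assumption} permits), uses the quantitative rates \eqref{ConvRates} on the range $i\le k_n$, and disposes of the high frequencies by the crude bound $\sum_{i>k_n}(\alpha+\lambda_i^n)^{-s/2}\le n(\alpha+\lambda_{k_n}^n)^{-s/2}$, which tends to zero by Weyl's law and the lower bound on $k_n$. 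This same mechanism is what you would need to establish your trace convergence, so your route and the paper's ultimately run through the same estimate; the difference is that the paper writes it down and you do not.

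Two smaller points. First, the paper proves $\widetilde{\E}\bigl[d_{TL^2}((\gamma_n,X^n),(\gamma,X))\bigr]\to 0$ rather than almost sure convergence; your tail control is only in expectation, so what you have actually shown is convergence in probability, which still suffices for weak convergence but is not the almost sure statement you claim. Second, the paper's cross term between $X^n_{k_n}$ and $X_{k_n}$ is handled by the triangle inequality on the sum, producing $\sum_i(\alpha+\lambda_i)^{-s/4}\lVert\psi_i-\psi_i^n\circ T_n\rVert$, and it is the summability of $(\alpha+\lambda_i)^{-s/4}$ that forces $s>2m$; your orthogonality-based bookkeeping would, if completed, appear to need only $s>m$, which is an advantage of your route worth noting.
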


\begin{remark}
The assumption $\veps_n \ll \frac{1}{n^{1/s}}$ in the above theorem is only needed for the stability of priors. That condition can be replaced with $\veps_n \ll 1$ for the stability of forward maps to hold.
\end{remark}

The second main result of this paper is the following. 

\begin{theorem}(Continuum limit of posteriors.)
\label{main:thm}
Under Assumption \ref{assumption} the sequence of graph-posteriors $\{ \boldsymbol{\mu_n} \}_{n \in \N}$ converges towards the  posterior $\boldsymbol{\mu}$ in $\P(TL^2).$

Moreover, 
\[   \mathcal{F}_{n\sharp } \boldsymbol{\mu_n} \converges{\P(TL^2)}{ \mathcal{F}_{\sharp } \boldsymbol{\mu}}.\] 
\end{theorem}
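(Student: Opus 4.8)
The plan is to combine the variational characterization \eqref{def:posteriorsvar} with the stability-of-minimizers statement of Proposition \ref{GammaConv}. Since $\boldsymbol{\mu}$ and $\boldsymbol{\mu_n}$ are the unique minimizers of $J$ and $J_n$, it suffices to prove that $J_n$ $\Gamma$-converges to $J$ in $\X=\P(TL^2)$, so that Proposition \ref{GammaConv} forces the minimizers to converge. I would read $J_n$ and $J$ as functionals on $\P(TL^2)$ via the inclusions $\mathcal{I}_{\gamma_n\sharp}$ and $\mathcal{I}_{\gamma\sharp}$, extended by $+\infty$ off the image of the inclusion; this extension is automatic, because the terms $\dkl(\tacka\|\boldsymbol{\pi_n})$ and $\dkl(\tacka\|\boldsymbol{\pi})$ already force competitors to be absolutely continuous with respect to the priors, hence supported on $L^2(\gamma_n)$ and $L^2(\gamma)$. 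By the remark following Proposition \ref{compos:prop} I may work with the restriction of weak convergence in the Polish space $\P(\overline{TL^2})$.

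For the liminf inequality, fix $\boldsymbol{\nu_n}\converges{\P(TL^2)}\boldsymbol{\nu}$ and treat the two terms of $J_n$ separately. For the relative-entropy term I would use the joint weak lower semicontinuity of the Kullback--Leibler divergence on $\P(\overline{TL^2})$ together with Theorem \ref{lemma}(1), which gives $\boldsymbol{\pi_n}\converges{\P(TL^2)}\boldsymbol{\pi}$; this yields $\liminf_n \dkl(\boldsymbol{\nu_n}\|\boldsymbol{\pi_n})\ge\dkl(\boldsymbol{\nu}\|\boldsymbol{\pi})$. For the potential term I note that, by Theorem \ref{lemma}(2)--(3), $\G_n(u_n)=\O_n(\F_n(u_n))\to\G(u)=\O(\F(u))$ in $\R^p$ whenever $u_n\converges{TL^2}u$; since $\phi$ is continuous this means $\phi_n(u_n;y)\to\phi(u;y)$ along such sequences, i.e. $\phi_n$ converges continuously to $\phi$. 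Passing to a Skorohod representation exactly as in the proof of Proposition \ref{compos:prop} and using that $\phi$ is bounded below, Fatou's lemma gives $\liminf_n \int \phi_n\,d\boldsymbol{\nu_n}\ge\int\phi\,d\boldsymbol{\nu}$. Summing the two bounds proves the inequality.

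The recovery (limsup) inequality is the step I expect to be the main obstacle. It suffices to treat $\boldsymbol{\nu}\ll\boldsymbol{\pi}$, and, by Remark \ref{DenseSet}, to build recovery sequences only for a subset that is dense with respect to $J$. I would take this dense set to consist of measures $\boldsymbol{\nu}$ whose density $\rho=d\boldsymbol{\nu}/d\boldsymbol{\pi}$ is a bounded continuous function of finitely many Karhunen--Lo\`eve coordinates of $\boldsymbol{\pi}$, that is, of finitely many Laplacian eigenfunction modes. For such $\boldsymbol{\nu}$ I set $\boldsymbol{\nu_n}:=\rho_n\,\boldsymbol{\pi_n}$, where $\rho_n$ is the same bounded continuous function evaluated at the corresponding modes of $\Delta_n$; the spectral convergence of the graph Laplacian underlying Theorem \ref{lemma}(1) guarantees that these discrete modes converge in $TL^2$ to their continuum counterparts, so $\rho_n\to\rho$ continuously. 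Then $\boldsymbol{\pi_n}\converges{\P(TL^2)}\boldsymbol{\pi}$ forces $\boldsymbol{\nu_n}\converges{\P(TL^2)}\boldsymbol{\nu}$, and since $\rho_n$ is uniformly bounded and $t\mapsto t\log t$ is continuous, $\dkl(\boldsymbol{\nu_n}\|\boldsymbol{\pi_n})=\int\rho_n\log\rho_n\,d\boldsymbol{\pi_n}\to\int\rho\log\rho\,d\boldsymbol{\pi}=\dkl(\boldsymbol{\nu}\|\boldsymbol{\pi})$, while the potential term converges by the continuous-convergence argument above (now with dominated convergence, as $\rho_n$ is bounded). Hence $\limsup_n J_n(\boldsymbol{\nu_n})\le J(\boldsymbol{\nu})$. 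The delicate points here are the dense-set reduction and the transfer of densities through the spectral correspondence.

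Finally, for the compactness property, if $\sup_k J_{n_k}(\boldsymbol{\nu_k})<\infty$ then, $\phi$ being bounded below, $\sup_k\dkl(\boldsymbol{\nu_k}\|\boldsymbol{\pi_{n_k}})<\infty$. Because the references $\boldsymbol{\pi_{n_k}}$ converge in $\P(TL^2)$ and are therefore tight, bounded sublevel sets of the relative entropy are tight in $\P(\overline{TL^2})$; Prokhorov's theorem yields a weakly convergent subsequence, and since the $\M$-marginals of $\boldsymbol{\nu_k}$ equal $\gamma_{n_k}$, which converge to $\gamma$, every cluster point lies in $\P(TL^2)$ (cf. Remark \ref{CompletionTL2}). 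With the three properties established, Proposition \ref{GammaConv} and the uniqueness of the minimizer of $J$ (Remark \ref{remarkuniqueness}) give $\boldsymbol{\mu_n}\converges{\P(TL^2)}\boldsymbol{\mu}$. The ``moreover'' statement is then immediate from Proposition \ref{compos:prop} applied with $\h_n=\F_n$ and $\h=\F$: its hypothesis (1) is the convergence just proved, and its hypothesis (2) is precisely the stability of forward maps in Theorem \ref{lemma}(2).
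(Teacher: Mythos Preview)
Your overall scheme---$\Gamma$-convergence of $J_n$ to $J$ in $\P(TL^2)$, then Proposition \ref{GammaConv} and Remark \ref{remarkuniqueness}---is exactly the paper's, and your liminf argument matches it closely: joint lower semicontinuity of $\dkl$ for the first term, and continuous convergence of $\phi_n$ combined with a Fatou-type lemma (the paper uses Lemma \ref{lemmaaux1}) for the second. One small point in your compactness step: your reason for cluster points landing in $\P(TL^2)$ rather than merely in $\P(\overline{TL^2})$ is not the right one. Saying ``the $\M$-marginals equal $\gamma_{n_k}$'' does not by itself prevent a limit in $\P_2(\M\times\R)$ from escaping the graph set $TL^2$. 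The paper instead invokes Lemma \ref{lemmaaux}: the uniform KL bound forces any cluster point to be absolutely continuous with respect to $\boldsymbol{\pi}$, which is concentrated on $L^2(\gamma)\subset TL^2$, and that settles it.

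Your recovery-sequence construction is where you genuinely diverge from the paper. You propose restricting to a dense class of $\boldsymbol{\nu}$ with bounded cylinder densities in finitely many eigenmodes, and transplanting those densities onto $\boldsymbol{\pi_n}$ via the graph eigenfunctions. The paper instead builds, for every $\boldsymbol{\nu}$ with $J(\boldsymbol{\nu})<\infty$ (after only a reduction to finite second moment), a spectral map
\[
P_n:\;u=\sum_i a_i\psi_i\;\longmapsto\;\sum_{i=1}^n\Bigl(\tfrac{\alpha+\lambda_i}{\alpha+\lambda_i^n}\Bigr)^{s/4}a_i\,\psi_i^n
\]
and sets $\boldsymbol{\nu_n}:=P_{n\sharp}\boldsymbol{\nu}$. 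The twist $\bigl((\alpha+\lambda_i)/(\alpha+\lambda_i^n)\bigr)^{s/4}$ is chosen precisely so that $P_{n\sharp}\boldsymbol{\pi}=\boldsymbol{\pi_n}$, whence the data-processing inequality gives $\dkl(\boldsymbol{\nu_n}\|\boldsymbol{\pi_n})\le\dkl(\boldsymbol{\nu}\|\boldsymbol{\pi})$ for free, with no density approximation at all. The remaining work is to show $P_{n\sharp}\boldsymbol{\nu}\converges{\P(TL^2)}\boldsymbol{\nu}$ (via matching finite-dimensional projections) and to dominate $\phi\bigl(\O_n\circ\F_n(P_nu);y\bigr)$ by $C\lVert u\rVert_{L^2(\gamma)}^2$ for a dominated-convergence argument on the potential term. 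Your route trades the design of $P_n$ for a dense-set reduction; it is viable, but you would have to (i) normalize $\rho_n\,\boldsymbol{\pi_n}$ to a probability measure, (ii) justify that bounded cylinder densities are dense with respect to $J$, including the possibly unbounded potential term, and (iii) supply the uniform integrability that lets dominated convergence handle $\int\phi_n\,d\boldsymbol{\nu_n}$. The paper's push-forward construction sidesteps (i)--(ii) entirely.
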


\begin{remark}
Besides providing a clear and systematic way of obtaining stability of the graph-posteriors $\boldsymbol{\mu_n}$, the notion of $\Gamma$-convergence is also an important part when establishing stability of gradient flows. This has been pointed out in \cite{serfaty2011gamma} and \cite{gigli2010heat}. The connection of gradient flows of energies such as $J_n$ in Wasserstein space with the Bayesian update, as well as the connection with sampling methodologies, has been made apparent in \cite{trillossanzflows}.
\end{remark}



\section{Proofs of Main Results}\label{sec:proofs}
In this section we prove our main results. Some auxiliary lemmas are collected in Subsection \ref{ssec:auxiliary}, and the main theorems are proved in Subsection \ref{ssec:mainproofs}.
\subsection{Auxiliary Results}\label{ssec:auxiliary}
\begin{lemma}
\label{lemmaaux}
Let $\mathcal{X}$ be a complete separable metric space and suppose that $\{ \nu_n \}_{n\in \N} $ and $\{   \theta_n \}_{n\in \N}$ are two sequences in $\mathcal{X}$ satisfying
\begin{enumerate}
\item $\sup_{n \in \N}  \dkl( \theta_n \| \nu_n ) < \infty .$
\item $ \nu_n \converges{w} \nu $, for some $
\nu \in \mathcal{P}(\mathcal{X})$.
\end{enumerate}
Then, $\{ \theta_n \}_{n \in \N}$ is weakly pre-compact. Moreover, any of its cluster points is absolutely continuous with respect to $\nu$. 
\end{lemma}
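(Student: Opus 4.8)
The plan is to split the statement into its two assertions---pre-compactness of $\{\theta_n\}$ and absolute continuity of its cluster points---and to handle them, respectively, with Prokhorov's theorem and the lower semicontinuity of relative entropy. Since $\mathcal{X}$ is Polish, weak pre-compactness is equivalent to tightness, so the first task reduces to transferring tightness from the sequence $\{\nu_n\}$ (which is tight, being weakly convergent on a Polish space) to $\{\theta_n\}$, using only the uniform bound $\sup_{n} \dkl(\theta_n \| \nu_n) =: C < \infty$.

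For the transfer I would use the Donsker--Varadhan variational representation of relative entropy. Testing it against $g = \lambda \charf_A$ for a Borel set $A$ and $\lambda>0$ yields
$$\dkl(\theta_n \| \nu_n) \ge \lambda\, \theta_n(A) - \log\bigl(1 + \nu_n(A)(e^\lambda - 1)\bigr),$$
and hence, using $\log(1+x)\le x$,
$$\theta_n(A) \le \frac{C + \nu_n(A)\, e^\lambda}{\lambda}.$$
Given $\epsilon>0$, I would first fix $\lambda$ so large that $C/\lambda \le \epsilon/2$, and then---invoking tightness of $\{\nu_n\}$---choose a compact set $K$ with $\sup_n \nu_n(K^c) \le \delta$, where $\delta=\delta(\lambda,\epsilon)$ is small enough that $\delta\, e^\lambda/\lambda \le \epsilon/2$. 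Applying the displayed bound with $A=K^c$ gives $\sup_n \theta_n(K^c) \le \epsilon$, so $\{\theta_n\}$ is tight, and Prokhorov's theorem delivers weak pre-compactness.

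For the absolute continuity, let $\theta$ be a cluster point, say $\theta_{n_k} \converges{w} \theta$; along this subsequence one still has $\nu_{n_k} \converges{w} \nu$. The key tool is the joint lower semicontinuity of $(\mu,\rho)\mapsto \dkl(\mu\|\rho)$ with respect to weak convergence of both arguments (a standard fact). This gives
$$\dkl(\theta \| \nu) \le \liminf_{k\to\infty} \dkl(\theta_{n_k}\|\nu_{n_k}) \le C < \infty,$$
and since relative entropy is finite only when its first argument is absolutely continuous with respect to the second, we conclude $\theta \ll \nu$.

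I expect the main obstacle to be the tightness transfer: the qualitative idea that a relative-entropy bound forces $\{\theta_n\}$ to inherit tightness from $\{\nu_n\}$ is intuitive, but making it quantitative requires the right inequality and, crucially, the correct order of quantifiers (choosing $\lambda$ before $\delta$, so that the two error terms can be controlled independently). The absolute-continuity half is comparatively routine once lower semicontinuity of relative entropy is invoked.
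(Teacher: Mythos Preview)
Your proof is correct. The second half (absolute continuity of cluster points via joint lower semicontinuity of $\dkl$) is essentially identical to the paper's argument. The tightness transfer, however, is handled differently: the paper works with the densities $\rho_n=\tfrac{d\theta_n}{d\nu_n}$ and runs a De la Vall\'ee--Poussin style uniform-integrability argument based on the superlinearity of $t\mapsto t\log t$, splitting $\int_{K^c}\rho_n\,d\nu_n$ into the piece where $\rho_n\ge c^*$ (small by uniform integrability) and the piece where $\rho_n< c^*$ (small by tightness of $\{\nu_n\}$). Your Donsker--Varadhan route is a clean alternative: it bypasses the densities entirely and produces the quantitative inequality $\theta_n(A)\le (C+\nu_n(A)e^\lambda)/\lambda$ directly. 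Both arguments amount to the same entropy--tightness tradeoff, and neither is materially shorter; the paper's version generalizes more readily to other superlinear integrands, while yours is arguably more self-contained since it avoids the implicit appeal to a ``varying-measure'' De la Vall\'ee--Poussin statement.
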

\begin{proof} 
Let $\epsilon>0$ and set 
\[  \rho_n:= \frac{d \theta_n}{d \nu_n}.\]
To prove the result we can not apply the standard  De la Vall\'{e}e--Poussin's theorem (see \cite{meyer1966probability}) given that $\nu_n$ changes with $n$. Nevertheless, using the super-linearity of the function $\Phi(t) = t \log t$, and the same idea of proof it is possible to show that assumption (1) implies that 
$$\lim_{c\to \infty} \sup_{n\in N} \int_{ |\rho_n|\ge c} |\rho_n| d\nu_n = 0.$$
and hence there is $c^*>0$ such that, for all $n\in \N,$
$$\int_{ |\rho_n|\ge c^*} |\rho_n| d\nu_n \le \epsilon/2.$$
Because of the assumed tightness of $\{\nu_n\}_{n\in \N}$ we can choose a compact set $K$ such that 
$$\nu_n(\X/K) \le \frac{\epsilon}{2c^*}.$$
Thus 
$$\theta_n(\X/K) = \int_{\X/K} \rho_n d\nu_n \le \frac{\epsilon}{2} + c^* \frac{\epsilon}{2c^*} = \epsilon,$$
and $\{\theta_n\}_{n\in\N}$ is tight, and hence pre-compact. 
For the last part, suppose without loss of generality that $\theta_n \converges{w} \theta.$ Then, using the assumption (2) and the lower semicontinuity of the Kullback-Leibler divergence \cite{dupuis2011weak}, 
$$ \dkl(\theta\| \nu) \le \liminf_{n\to \infty} \dkl(\theta_n\| \nu_n) <\infty.$$
This shows that $\theta$ is absolutely continuous with respect to $\nu$ and completes the proof.
 
\end{proof}

We make use of the following extension of Fatou's lemma, proved in \cite{fatou}.
\begin{lemma}
\label{lemmaaux1}
Let $(\mathcal{X},d_\X)$ be an arbitrary Polish space and suppose that $\{  H_n \}_{n \in \N}$ is a sequence of lower-semi-continuous functions $H_n : \mathcal{X} \rightarrow [0,\infty]$ bounded uniformly from below and satisfying
\[  \liminf_{n \rightarrow \infty }  H_n(x_n)  \geq H(x),    \]
whenever 
\[x_n \rightarrow x.\]
In the above, $H$ is another function $H : \mathcal{X} \rightarrow [0,\infty]$.  In addition, suppose that $\{ \nu_n \}_{n \in \N}$ is a sequence in $\mathcal{P}(\mathcal{X})$ converging weakly towards $\nu \in \mathcal{P}(\mathcal{X}) $. Then, 
\[ \liminf_{n \rightarrow \infty} \int_{\mathcal{X}} H_n(x) d \nu_n(x)  \geq \int_{\mathcal{X}} H(x) d \nu(x).  \]
\end{lemma}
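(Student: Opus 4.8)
The plan is to reduce the statement to the classical pointwise Fatou lemma by converting the weak convergence $\nu_n \converges{w} \nu$ into almost sure convergence through Skorokhod's representation theorem --- the same device already employed in the proof of Proposition \ref{compos:prop}. Since $(\X, d_\X)$ is Polish and $\nu_n \converges{w} \nu$, Skorokhod's theorem furnishes a probability space $(\widetilde{\Omega}, \widetilde{\mathfrak{F}}, \widetilde{\mathbb{P}})$ carrying $\X$-valued random variables $\{X_n\}_{n\in\N}$ and $X$, with laws $\{\nu_n\}_{n\in\N}$ and $\nu$ respectively, such that $X_n(\widetilde{\omega}) \to X(\widetilde{\omega})$ for $\widetilde{\mathbb{P}}$-almost every $\widetilde{\omega}$.

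Before running the argument I would settle the measurability that makes all integrals meaningful. Each $H_n$ is lower-semicontinuous, hence Borel, so $H_n(X_n)$ is a bona fide random variable and $\int_\X H_n \, d\nu_n = \widetilde{\mathbb{E}}[H_n(X_n)]$; the uniform lower bound means we may take $H_n \ge 0$ (the general case following by subtracting the common constant). The only object whose regularity is not granted is the limit $H$. To sidestep this I would replace $H$ by the sequential $\Gamma$-lower limit $\underline{H}(x) := \inf\{ \liminf_{n} H_n(x_n) : x_n \to x \}$, which is automatically lower-semicontinuous (hence Borel) and, directly from the hypothesis, satisfies $\underline{H} \ge H$ pointwise; since it suffices to prove the inequality with $\underline{H}$ in place of $H$, I may henceforth assume $H$ is Borel.

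The core of the proof is then immediate. On the full-measure event $\{X_n \to X\}$, applying the standing $\liminf$ hypothesis to the convergent sequence $x_n = X_n(\widetilde{\omega})$ gives
$$\liminf_{n\to\infty} H_n\bigl(X_n(\widetilde{\omega})\bigr) \ge H\bigl(X(\widetilde{\omega})\bigr) \qquad \text{for } \widetilde{\mathbb{P}}\text{-a.e. } \widetilde{\omega}.$$
Since the integrands are nonnegative, the classical Fatou lemma applied to $H_n(X_n)$ yields
\begin{align*}
\liminf_{n\to\infty}\int_\X H_n \, d\nu_n
&= \liminf_{n\to\infty}\widetilde{\mathbb{E}}\bigl[H_n(X_n)\bigr] \\
&\ge \widetilde{\mathbb{E}}\Bigl[\liminf_{n\to\infty} H_n(X_n)\Bigr]
\ge \widetilde{\mathbb{E}}\bigl[H(X)\bigr]
= \int_\X H \, d\nu,
\end{align*}
which is the desired conclusion.

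The argument is short, and I expect the only delicate point to be the measurability and regularity of the limit $H$: this is exactly why I would introduce the lower-semicontinuous envelope $\underline{H}$ at the outset rather than integrate $H$ directly. The uniform lower bound serves only to license the nonnegative form of Fatou's lemma, and the individual lower-semicontinuity of the $H_n$ is used solely to ensure Borel measurability of the integrands; beyond the stated $\liminf$ inequality, no finer epi-convergence machinery is needed.
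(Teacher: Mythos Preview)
Your argument is correct. The paper does not supply its own proof of this lemma; it simply records the statement and cites an external reference (\cite{fatou}). So there is no ``paper's proof'' to compare against beyond noting that the result is classical.

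Your route---Skorokhod representation followed by the pointwise Fatou lemma---is the standard short proof, and it is fully consistent with the techniques already used elsewhere in the paper (e.g.\ Proposition~\ref{compos:prop}). Two minor remarks: (i) the codomain in the statement is already $[0,\infty]$, so the uniform lower bound is built in and the ``subtract a constant'' step is unnecessary here; (ii) your handling of the possible non-measurability of $H$ via the sequential $\Gamma$-lower limit $\underline{H}$ is the right precaution---$\underline{H}$ is indeed lower semicontinuous on a metric space (see, e.g., Dal~Maso \cite{dal2012introduction}), and since $\underline{H}\ge H$ pointwise the desired inequality follows a fortiori. In the paper's actual application the functions $H_n$ and $H$ are built from continuous maps, so the measurability issue does not arise, but your treatment covers the lemma as stated.
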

\subsection{Proof of Main Results}\label{ssec:mainproofs}

\subsubsection{Proof of Theorem \ref{lemma}}

Throughout the proof we let $\{ \psi_1^n, \dots, \psi_n^n \}$ be an orthonormal basis (in $L^2(\gamma_n)$)  of eigenfunctions of $\Delta_n$ with associated eigenvalues
\[\lambda_1^n \leq \dots \leq \lambda_n^n.\]
Likewise, we let $\{ \psi_1, \dots, \psi_n, \dots \}$ be an orthonormal basis (in $L^2(\gamma)$) of eigenfunctions of $-\Delta$ with associated eigenvalues
\[\lambda_1\leq \dots \leq \lambda_n \leq \dots.\]
By the results in \cite{trillosACHA} we can assume without the loss of generality that, for all fixed $l \in \N,$
\begin{equation}
\psi_l^n \converges{TL^2} \psi_l ,
\label{ConvEigenfunc}
\end{equation}
and that 
\begin{equation}
\lambda_{l}^n \rightarrow \lambda_l.
\label{ConvEigenvalue}
\end{equation}
For the purposes of establishing the consistency of prior distibutions (first part in Theorem \ref{lemma}) we need the finer results from \cite{burago2014graph}; the results are generalized in \cite{SpecRatesTrillos} to allow for non-uniform densities and different versions of graph Laplacians.  The results in \cite{burago2014graph} quantify the rate of convergence for a portion of the spectrum in terms of $t_n$ (defined in \eqref{Tn}) and $\veps_n$. In particular the results imply that there exists a constant $C>0$ such that, for all large enough $n$,
\begin{equation}
\label{ConvRates}
\lvert \lambda_i - \lambda_i^n \rvert \leq C \left( \frac{t_n}{\veps_n}  + \sqrt{\lambda_i} \veps_n \right) \leq C \left( \frac{t_n}{\veps_n} + k_n^{1/m}\veps_n \right), 
\end{equation}
for all $i=1, \dots, k_n$, where $k_n$ is chosen so that $k_n \leq c \lceil \frac{1}{\veps_n^m} \rceil $ for some small enough constant $c>0$.
The last inequality in \eqref{ConvRates} follows from Weyl's law for the growth of eigenvalues of $-\Delta$.   \nc

\begin{proof}[Continuum limit of priors]

Consider a probability space $(\widetilde{\Omega}, \widetilde{\mathfrak{F}}, \widetilde{\mathbb{P}})$ supporting i.i.d random variables $\{\xi_i \}_{i \in \N}$ with
\[ \xi_i \sim N(0,1). \]
For every $n \in \N$ we construct the $TL^2$-valued random variable
\[  X^n:= \sum_{i=1}^n (\alpha + \lambda_i^n)^{-s/4} \xi_i \, \psi_i^n, \]
where we notice that $X^n\sim \boldsymbol{\pi_n}$. Likewise, we construct the $TL^2$-valued random variable
\[ X:= \sum_{i=1}^\infty (\alpha + \lambda_i)^{-s/4} \xi_i \, \psi_i.  \]
We notice that $X$ is a well defined random variable with values in $L^2(\gamma)$ (and hence in $TL^2$) due to the assumption $s > m$. Moreover, $X \sim \boldsymbol{\pi}$.

The triangle inequality implies that
\begin{align}
\begin{split}
\label{auxPriors}
\widetilde{\E} \Bigl[d_{TLˆ2}\bigl(  (\gamma_n , X^n) , (\gamma, X)  \bigl) \Bigr]  \leq & \widetilde{\E}\Bigl[d_{TL^2}\bigl( ( \gamma_n , X^n) , ( \gamma_n , X^n_{k_n} )  \bigr) \Bigr]
\\ & + \widetilde{\E}\Bigl[d_{TL^2}\bigl( ( \gamma_n , X^n_{k_n}) , ( \gamma , X_{k_n} )  \bigr) \Bigr]
\\& + \widetilde{\E}\Bigl[d_{TL^2}\bigl( ( \gamma, X_{k_n}) , ( \gamma , X )  ) \Bigr],
\end{split}
\end{align}
where
\[ X^n_{k_n}:= \sum_{i=1}^{k_n} (\alpha+\lambda_i^n)^{-s/4}\xi_i \psi_i^n, \]
\[ X_{k_n}:= \sum_{i=1}^{k_n} (\alpha+\lambda_i)^{-s/4}\xi_i \psi_i, \]
for $k_n\le n$ chosen so that 
\[ n^{m/s} \ll k_n \ll \lceil \frac{1}{\veps_n^m} \rceil. \]
Notice that such $k_n$ can indeed be constructed due to the assumptions on $\veps_n$. We now show that each of the terms on the right hand side of \eqref{auxPriors} goes to zero as $n \rightarrow \infty$. 

First of all 
\[ \widetilde{\E}(d_{TL^2}( (\gamma,X_{k_n} ) , (\gamma , X)  ) ) \leq \widetilde{\E}(\lvert X- X_{k_n}  \rvert_{L^2(\gamma)}) \leq  \left( \sum_{i=k_n+1}^\infty (\alpha + \lambda_i)^{-s/2} \right)^{1/2}  , \] 
 where the last inequality follows after an application of Jensen's inequality for concave functions. It then follows that
\[ \lim_{n \rightarrow \infty} \widetilde{\E}\Bigl[d_{TL^2}\bigl( (\gamma,X_{k_n} ) , (\gamma, X )\bigr)\Bigr]=0. \]

Similarly, we see that
\[ \widetilde{\E}\Bigl[d_{TL^2}\bigl( (\gamma_n,X^n ) , (\gamma_n , X_{k_n}^n)   \bigl)\Bigr]\leq   \left( \sum_{i=k_n+1}^\infty (\alpha + \lambda_i)^{-s/2} \right)^{1/2} \leq  \left( \frac{n}{(\alpha + \lambda_{k_n}^n  )^{s/2}}  \right)^{1/2}. \] 
Using that $n^{m/s}\ll k_n$, \eqref{ConvRates} and Weyl's law we deduce that 
\[ \lim_{n \rightarrow \infty} \widetilde{\E}\Bigl[d_{TL^2}\bigl( (\gamma_n,X^n ) , (\gamma_n,X_{k_n}^n)\bigl)\Bigr]=0. \]

A further application of the triangle inequality gives
\[ \widetilde{\E}\Bigl[d_{TL^2}\bigl((\gamma_n, X_{k_n}^n) , ( \gamma, X_{k_n} ) \bigr)   \Bigr] \leq  \widetilde{\E}\Bigl[d_{TL^2}\bigl((\gamma_n, X_{k_n}^n) , ( \gamma_n, \hat{X}_{k_n}^n ) \bigr)   \Bigr] 
+ \widetilde{\E}\Bigl[d_{TL^2}\bigl((\gamma_n, \hat{X}_{k_n}^n) , ( \gamma, X_{k_n} ) \bigr)  \Bigr], \]
where
\[  \hat{X}_{k_n}^n := \sum_{i=1}^{k_n}(\alpha+ \lambda_i)^{-s/4}\xi_i \psi_i^n. \]
It then follows that
\begin{align*}
 \widetilde{\E}\Bigl[d_{TL^2}\bigl((\gamma_n, X_{k_n}^n) , ( \gamma, X_{k_n} ) \bigr)  \Bigr] \leq &\left( \sum_{i=1}^{k_n} \Bigl( (\alpha + \lambda_i^n)^{-s/4} - ( \alpha + \lambda_i)^{-s/4}  \Bigr)^2   \right)^{1/2}  \\
 &+ \sum_{i=1}^{k_n} (\alpha+ \lambda_i)^{-s/4} \lVert \psi_i - \psi_i^n\circ T_n \rVert_{L^2(\gamma_n)}, 
\end{align*}

where $T_n: \M \rightarrow \M_n$ is the transport map in 
 \eqref{Tn}. From $\eqref{ConvRates} $ it follows that
\begin{equation} \lim_{n \rightarrow \infty}  
\left( \sum_{i=1}^{k_n}\Bigl( (\alpha + \lambda_i^n)^{-s/4} - ( \alpha + \lambda_i)^{-s/4}  \Bigr)^2  \right)^{1/2} =0.
\label{AuxPriors2}
\end{equation}
On the other hand, for every fixed $l \in \N$ we have
\[ \sum_{i=1}^{k_n}  (\alpha+ \lambda_i)^{-s/4} \lVert \psi_i - \psi_i^n\circ T_n \rVert_{L^2(\gamma_n)} \leq \sum_{i=1}^{l} (\alpha+ \lambda_i)^{-s/4} \lVert \psi_i - \psi_i^n\circ T_n \rVert_{L^2(\gamma_n)} +  2 \sum_{i=l+1}^{k_n}(\alpha+ \lambda_i)^{-s/4}.  \] 
Combining the above with \eqref{AuxPriors2} and \eqref{ConvEigenfunc} we obtain
\[ \limsup_{n \rightarrow \infty} \widetilde{\E}\Bigl[d_{TL^2}\bigl((\gamma_n, X_{k_n}^n) , ( \gamma, X_{k_n}) \bigr)   \Bigr] \leq  \sum_{i=l+1}^{\infty}(\alpha+ \lambda_i)^{-s/4}, \quad \forall l \in \N. \]
Since $s>2m$, taking the limit of the right hand side of the above inequality as $l \rightarrow \infty$ gives
\[ \lim_{n \rightarrow \infty} \widetilde{\E}\Bigl[ d_{TL^2}\bigl( (\gamma_n, X_{k_n}^n) , (\gamma, X_{k_n})  \bigr) \Bigr]=0. \]

Putting together all the previous partial results we conclude that
\[ \lim_{n \rightarrow \infty} \widetilde{\E}\Bigl[ d_{TL^2}\bigl( (\gamma_n, X^n) , (\gamma, X)  \bigr) \Bigr]=0. \]
This implies that $\boldsymbol{\pi_n} \converges{\mathcal{P}(TL^2)} \boldsymbol{\pi}$.
\end{proof}

\begin{proof}[Consistency of forward maps]  
Suppose that $u_n \converges{TL^2}{u},$  so in  particular
\begin{equation}
C:= \sup_{n\in\N} \|u_n\|_{L^2(\gamma_n)} <\infty.
\end{equation} 

Note that $$\F_n(u_n) = \sum_{i=1}^n \exp(-\lambda_i^n) \langle u_n,\psi_i^n \rangle \psi_i^n,$$
$$\F(u) = \sum_{i=1}^n \exp(-\lambda_i) \langle u,\psi_i \rangle \psi_i.$$
For a fixed $l\in \N$ and $n>l$ we write
\begin{align*}
\F_n(u_n) &= \sum_{i=1}^l \exp(-\lambda_i^n) \langle u_n,\psi_i^n \rangle \psi_i^n   + \sum_{i=l+1}^n \exp(-\lambda_i^n) \langle u_n,\psi_i^n \rangle \psi_i^n \\
&=:\F_n(u_n;l)   + \sum_{i=l+1}^n \exp(-\lambda_i^n) \langle u_n,\psi_i^n \rangle \psi_i^n,
\end{align*}
and likewise
\begin{equation}
\F(u) =: \F(u;l)   + \sum_{i=l+1}^n \exp(-\lambda_i) \langle u,\psi_i \rangle \psi_i.
\end{equation}
The triangle inequality gives
\begin{align*}
 d_{TL^2}\Bigl(\bigl(\gamma_n, \F_n(u_n)\bigr), \bigl(\gamma, \F(u) \bigr)\Bigr) \le\,\, &  d_{TL^2}\Bigl(\bigl(\gamma_n, \F_n(u_n)\bigr), \bigl(\gamma, \F_n(u_n;l) \bigr)\Bigr) \\ &+ d_{TL^2}\Bigl(\bigl(\gamma_n, \F_n(u_n;l)\bigr), \bigl(\gamma, \F(u;l) \bigr)\Bigr) \\  &+
 d_{TL^2}\Bigl(\bigl(\gamma_n, \F(u;l)\bigr), \bigl(\gamma, \F(u) \bigr)\Bigr). 
\end{align*}
We bound each of the terms in the right hand-side.
For the first term,
\begin{align*}
d_{TL^2}\Bigl(\bigl(\gamma_n, \F_n(u_n)\bigr), \bigl(\gamma, \F_n(u_n;l) \bigr)\Bigr) &\le \| \F_n(u_n) - \F_n(u_n;l)\|_{L^2(\gamma_n)} \\
&\le \biggl(  \sum_{i=l+1}^n   \exp(-2 \lambda_i^n)  \langle u_n, \psi_i^n \rangle   \biggr)^{1/2} \\
& \le \exp(-\lambda_l^n) \| u_n \|_{ L^2(\gamma_n)}\\
& \le \exp(-\lambda_l^n)C.
\end{align*}
Similarly, for the third one,
\begin{align*}
d_{TL^2}\Bigl(\bigl(\gamma, \F(u)\bigr), \bigl(\gamma, \F(u;l) \bigr)\Bigr) &\le \| \F(u) - \F(u;l)\|_{L^2(\gamma)} \\
& \le \exp(-\lambda_l) \| u \|_{ L^2(\gamma)}.
\end{align*}
For the second one, using \cite{trillosACHA}
$$\F_n(u_n;l) \converges{TL^2}{\F(u)},$$
and therefore 
$$d_{TL^2}\Bigl(\bigl(\gamma_n, \F_n(u_n;l)\bigr), \bigl(\gamma, \F(u;l) \bigr)\Bigr) \to 0.$$
Thus, 
\begin{equation}
\label{auxContF}
 \limsup_{n \rightarrow \infty}  d_{TL^2}\Bigl(\bigl(\gamma_n, \F_n(u_n)\bigr), \bigl(\gamma, \F(u) \bigr)\Bigr) \leq  2C \exp(- \lambda_l)    ,   
 \end{equation}
where we have used that 
$  \lambda_{l}^n \to \lambda_l . $

We may now take the limit of the right hand side of \eqref{auxContF} as $l \to  \infty$ to deduce that
\[ \limsup_{n \rightarrow \infty} d_{TL^2}\Bigl(\bigl(\gamma_n, \F_n(u_n)\bigr), \bigl(\gamma, \F(u) \bigr)\Bigr) =0.  \]
Hence, $\mathcal{F}_n(u_n) \converges{TL^2} \mathcal{F}(u)$ as we wanted to prove. 
\end{proof}

\begin{proof}[Consistency of observation maps]

Suppose that $v_n \converges{TL^2} v $. We want to show that $\O_n(v_n) \rightarrow \O(v)$ in $\R^p$. Notice that the $i$-th coordinates of $\O_n(v_n)$ and $\O(v)$ are, respectively,
\[ [\O_n (v_n)]_i  =  \langle v_n,  \mathds{1}_{B_\delta( \x_i) \cap \M_n}   \rangle_{L^2(\gamma_n)},\] 
\[ [\O (v)]_i  =  \langle v ,  \mathds{1}_{B_\delta( \x_i) \cap \M} \rangle_{L^2(\gamma)}.\] 
Since
\[  \mathds{1}_{B(\x_i, \delta)\cap \M_n} \converges{TL^2} \mathds{1}_{B(\x_i, \delta) \cap \M},\]
we may use the fact that $v_n \converges{TL^2 } v $ and the continuity of inner products under $TL^2$-convergence (see Proposition 2.6 in \cite{trillosACHA}) to deduce that
\[ [ \O_n (v_n)  ]_i \rightarrow [\O (v) ]_i, \quad \forall i=1, \dots, p .\]
\end{proof}

\subsubsection{Proof of Theorem \ref{main:thm}}
By Proposition \ref{GammaConv} it is enough to establish the $\Gamma$-convergence of $J_n$ towards $J$. Notice that the graph-posterior distributions $\boldsymbol{\mu_n}$ and $\boldsymbol{\mu}$ are the unique minimizers of $J_n$ and $J$, respectively.

\textbf{Liminf inequality:} Suppose that $\boldsymbol{\nu_n} \converges{\mathcal{P}(TL^2)} \boldsymbol{\nu}$ where, as usual, $\boldsymbol{\nu_n} \in \P(L^2(\gamma_n))$, $\boldsymbol{\nu} \in \P(L^2(\gamma))$, and we interpret the statement $\boldsymbol{\nu_n} \converges{\mathcal{P}(TL^2)} \boldsymbol{\nu}$ as
$\mathcal{I}_{\gamma_n \sharp}\boldsymbol{\nu_n} \converges{\mathcal{P}(TL^2)} \mathcal{I}_{\gamma \sharp } \boldsymbol{\nu}.$ First we show that
\begin{equation}
 \liminf_{n \rightarrow \infty} \int_{L^2(\gamma_n)} \phi( \O_n \circ \F_n (u_n) ; y  )d \boldsymbol{\nu_n}(u_n) \geq  \int_{L^2(\gamma)} \phi( \O \circ \F (u) ; y  )d \boldsymbol{\nu}(u).
\label{AuxLiminf}
\end{equation}
For that purpose consider the functions $H_n : TL^2 \rightarrow \R\cup \{ \infty\}$ and $H: TL^2 \rightarrow \R \cup \{ \infty \}$ defined by
\[   H_n\bigl( (\theta, v  ) \bigr) := \begin{cases}  \phi\bigl( \O_n \circ \F_n (  v ) ; y  \bigr),  & \text{ if } \theta=\gamma_n, \\ \infty, & \text{otherwise},    \end{cases}  \]
\[   H\bigl( (\theta, v  ) \bigr) := \begin{cases}  \phi\bigl( \O \circ \F (  v ) ; y  \bigr),  & \text{ if } \theta=\gamma, \\ \infty, & \text{otherwise}.    \end{cases}  \] 
We claim that if  $ \bigl\{ (\theta_n , v_n ) \bigr\}_{n \in \N}$ converges in $TL^2$ towards $(\theta, v)$, then
\[ \liminf_{n \rightarrow \infty} H_n\bigl((\theta_n , v_n)\bigr) \geq H\bigl((\theta, v)\bigr) .   \]
Indeed, by the definition of $H_n$, we can assume without loss of genarality that for all $n \in \N$, $ \theta_n = \gamma_n $; in particular $\theta = \gamma$. The convergence $v_n \converges{TL^2} v$ and the second part of Theorem \ref{lemma} guarantee that $\F_n(v_n ) \converges{TL^2} \F(v)$. In turn, $\F_n(v_n) \converges{TL^2} \F(v)$ implies that $\O_n \circ \F_n(v_n) \converges{TL^2} \O\circ \F (v) $ by the last part of Theorem \ref{lemma}. Finally, the assumed continuity of $\phi(\cdot ; y)$ implies that
\[ \phi\bigl(\O_n \circ \F_n (v_n ) ; y  \bigr) \rightarrow \phi\bigl(\O \circ \F (v) ; y\bigr) ,\]
proving in this way our claim.

We may now apply Lemma \ref{lemmaaux1} to deduce that
\begin{align*}
\begin{split}
\liminf_{n \rightarrow \infty} \int_{L^2(\gamma_n)} \phi\bigl(\O_n \circ \F_n(u_n); y\bigr) \, d \boldsymbol{\nu_n}(u_n) &=  \liminf_{n \rightarrow \infty} \int_{ TL^2} H_n\bigl((\theta, v)\bigr) \, d \I_{\gamma_n \sharp}    \boldsymbol{\nu_n}   \bigl((\theta, v)\bigr) 
\\ & \geq   \int_{ TL^2} H\bigl((\theta, v)\bigr) \, d \I_{\gamma \sharp}   \boldsymbol{\nu}   \bigl((\theta, v)\bigr) 
\\ & = \int_{L^2(\gamma)} \phi\bigl(\O\circ \F(u);y \bigr)\, d\boldsymbol{\nu}(u),
\end{split}
\end{align*} 
establishing \eqref{AuxLiminf}.

On the other hand, the lower semicontinuity of $\dkl(\cdot \| \cdot)$ with respect to weak convergence of probability measures \cite{dupuis2011weak}, together with the first part of Theorem \ref{lemma}, implies that
\begin{align*}
\begin{split}
\liminf_{n \rightarrow \infty} \dkl(\boldsymbol{\nu_n} \| \boldsymbol{\pi_n}) & = \liminf_{n \rightarrow \infty} \dkl( \I_{\gamma_n \sharp} \boldsymbol{\nu_n} \| \I_{\gamma_n \sharp} \boldsymbol{\pi_n} ) 
\\ & \geq \dkl( \I_{ \gamma \sharp} \boldsymbol{\nu} \| \I_{\gamma \sharp} \boldsymbol{\pi}  )
\\ & = \dkl( \boldsymbol{\nu} \| \boldsymbol{\pi} ).
\end{split}
\end{align*}
Combining the above inequality with \eqref{AuxLiminf} we obtain the desired liminf inequality:
\[   \liminf_{n \rightarrow \infty} J_n(\boldsymbol{\nu_n}) \geq J(\boldsymbol{\nu}).  \]

\textbf{Compactness:} Let $\{ \boldsymbol{\nu_n} \}_{n \in \N}$ be a sequence satisfying 
\[ \sup_{n \in \N} J_n(\boldsymbol{\nu_n}) < \infty.\]
In particular, from the fact that $\phi$ is bounded from below it follows that
\[ \sup_{n \in \N} \dkl(\boldsymbol{\nu_n} \| \boldsymbol{\pi_n}) < \infty. \]
We can make use of the stability of priors from Theorem \ref{lemma} and of Lemma \ref{lemmaaux} with $\mathcal{X}= \overline{TL^2}$ to conclude that $\{ \boldsymbol{\nu_n} \}_{n \in \N}$ is pre-compact in $\mathcal{P}( \overline{TL^2})$. To prove that cluster points of $\{\boldsymbol{\nu_n} \}_{n \in \N}$ are actually in $\mathcal{P}(TL^2)$, we simply notice that Lemma \ref{lemma} also implies that cluster points of $\{ \boldsymbol{\nu_n}\}_{n \in \N}$ are absolutely continuous with respect to $\boldsymbol{\pi}$ (a measure that is concentrated in $TL^2$; in fact in $L^2(\gamma)$). Hence, $\{ \boldsymbol{\nu_n} \}_{n \in \N}$ is pre-compact in $\mathcal{P}(TL^2)$.

As stated earlier, the shown $\Gamma$-convergence of $J_n$ towards $J$ guarantees the convergence of posteriors
\[ \boldsymbol{\mu_n} \converges{\mathcal{P}(TL^2) } \boldsymbol{\mu}.\] 
From the above convergence, the stability of forward maps in Theorem \ref{lemma}, and Proposition \ref{compos:prop}, it follows that 
\[ \F_{n \sharp}\boldsymbol{\mu_n} \converges{\mathcal{P}(TL^2)}{  \F_{\sharp} \boldsymbol{\mu}}. \]

\textbf{Limsup inequality:} For the limsup inequality let $\boldsymbol{\nu} \in \mathcal{P}(L^2(\gamma))$ that without loss of generality we can assume satisfies $J(\boldsymbol{\nu}) < \infty$ so that in particular $\dkl(\boldsymbol{\nu}\| \boldsymbol{\pi})< \infty$.  Moreover, we can further assume that
\[ \int_{L^2(\gamma)} \lVert u \rVert^2_{L^2(\gamma )}d \boldsymbol{\nu}(u) < \infty  . \]
Indeed, this is possible due to Remark \ref{DenseSet} and the fact that the set of measures in $\mathcal{P}(L^2(\gamma))$ with finite second moments is dense in $\mathcal{P}(L^2(\gamma))$ with respect to the energy $J$; to see this it is enough to truncate densities, use the monotone convergence theorem, and use the fact that $\boldsymbol{\pi}$ has finite second moment.

We consider the maps $P_n : L^2(\gamma) \rightarrow L^2(\gamma_n)$ defined as 
\[ u= \sum_{i=1}^\infty  a_i \psi_i \longmapsto P_n(u) :=\sum_{i=1}^n \left(\frac{\alpha+ \lambda_i}{\alpha + \lambda_i^n}\right)^{s/4} a_i \psi_i^n. \]

Let $\boldsymbol{\nu_n}:= P_{n \sharp} \boldsymbol{\nu}$ and notice that $\boldsymbol{\pi_n} = P_{n \sharp} \boldsymbol{\pi}$. It follows that
\[ \dkl(\boldsymbol{\nu_n} \| \boldsymbol{\pi_n}) = \dkl( P_{n \sharp} \boldsymbol{\nu} \| P_{n \sharp} \boldsymbol{\pi}  ) \leq \dkl(\boldsymbol{\nu} \| \boldsymbol{\pi}) < \infty, \quad \forall n \in \N.\]
As in the proof of the compactness property we conclude that $\{ \boldsymbol{\nu_n} \}_{n \in \N}$ is pre-compact in $\mathcal{P}(TL^2)$ and moreover all its cluster points are concentrated in $L^2(\gamma)$. We claim that $\{\boldsymbol{\nu_n} \}_{n \in \N}$ converges to $\boldsymbol{\nu}$ by showing that $\boldsymbol{\nu}$ is the only cluster point of $\{\boldsymbol{\nu_n} \}_{n \in \N}$. For that purpose suppose without loss of generality that $\boldsymbol{\nu_n} \converges{\mathcal{P}(TL^2)} \tilde{\boldsymbol{\nu}}$; we want to show that $\tilde{\boldsymbol{\nu}}= \boldsymbol{\nu}$.

By Skorohod's theorem we can find a probability space $(\widetilde{\Omega}, \widetilde{\mathfrak{F}}, \widetilde{P})$ where $TL^2$-random variables $\{ \widetilde{X}_n \}_{n \in \N}$, $\tilde{X}$ can be defined in such a way that
\[ \widetilde{X}_n \sim \boldsymbol{\nu_n} , \forall n \in \N  \text{ and }\quad \tilde{X} \sim \tilde{\boldsymbol{\nu}},\]
and
\[ \widetilde{X}_n(\tilde{\omega}) \rightarrow \tilde{X}(\tilde{\omega}), \quad \widetilde{\mathbb{P}}-a.e. \,\,\tilde{\omega}. \]
Let now $X$ be a $L^2(\gamma)$-random variable with $X \sim \boldsymbol{\nu}$; this random variable may be defined in a probability space different from $(\widetilde{\Omega}, \widetilde{\mathfrak{F}}, \widetilde{P})$.

Fix $k \in \N$. By definition of $\boldsymbol{\nu_n}$ it follows that the random vectors $\vec{V}_n, \vec{\widetilde{V}}_n \in \R^k$, with coordinates
\[\vec{V}_{n,i}:= \left( \frac{\alpha + \lambda_i}{\alpha + \lambda_i^n} \right)^{s/4} \langle X , \psi_i \rangle_{L^2(\gamma)} , \quad i=1, \dots, k, \]
\[\vec{\widetilde{V}}_{n,i}:= \langle \widetilde{X}_n , \psi_i^n \rangle_{L^2(\gamma_n)} , \quad i=1, \dots, k, \]
have the same distributions. Taking limits as $n \rightarrow \infty$, using \eqref{ConvEigenvalue}, \eqref{ConvEigenfunc}, and the continuity of inner products with respect to $TL^2$ convergence, we conclude that the random vectors $\vec{V}, \vec{\widetilde{V}} \in \R^k$, with coordinates
\[\vec{V}_{i}:= \langle X , \psi_i \rangle_{L^2(\gamma)} , \quad i=1, \dots, k, \]
\[\vec{\widetilde{V}}_{i}:= \langle \widetilde{X} , \psi_i \rangle_{L^2(\gamma)} , \quad i=1, \dots, k, \]
have the same distribution. Since finite dimensional projections characterize distibutions of $L^2(\gamma)$-random variables, we deduce that $X$ and $\widetilde{X}$ have the same distribution and so $\boldsymbol{\nu} = \boldsymbol{\tilde{\nu}}$. Thus
\[ \boldsymbol{\nu_n} \converges{\mathcal{P}(TL^2)} \boldsymbol{\nu}, \]
and
\[ \limsup_{n \rightarrow \infty} \dkl(\boldsymbol{\nu_n} \| \boldsymbol{\pi_n}) \leq \dkl(\boldsymbol{\nu} \| \boldsymbol{\pi}).  \]

In the remainder of the proof we show that 
\begin{equation}
 \lim_{n \rightarrow \infty} \int_{L^2(\gamma_n)} \phi( \O_n \circ \F_n( u_n ); y ) d \boldsymbol{\nu_n}(u_n) = \int_{L^2(\gamma)} \phi( \O \circ \F (u); y ) d \boldsymbol{\nu}(u). 
\label{AuxLimsup1}
\end{equation}
First, observe that since $\boldsymbol{\nu_n}= P_{n \sharp} \boldsymbol{\nu}$, we can use the change of variables formula to write
\[ \int_{L^2(\gamma_n)} \phi\bigl( \O_n \circ \F_n( u_n ); y \bigr) d \boldsymbol{\nu_n}(u_n)  = \int_{L^2(\gamma)} \phi\bigl( \O_n \circ \F_n( P_nu ); y \bigr) d \boldsymbol{\nu}(u).  \]
We make use of the dominated convergence theorem to establish \ref{AuxLimsup1}.

Let $u \in L^2(\gamma)$. Then,
\[ \mathcal{F}_n(P_n u) = \sum_{i=1}^n \left(\frac{\alpha + \lambda_i}{\alpha + \lambda_i^n}\right)^{s/4} \langle u , \psi_i \rangle e^{-\lambda_i^n} \psi_i^n.\]
We consider the truncated version of $\mathcal{F}_n(P_n u)$ 
\[    \mathcal{F}_n(P_n u)_{k_n}:= \sum_{i=1}^{k_n} \left(\frac{\alpha + \lambda_i}{\alpha + \lambda_i^n}\right)^{s/4} \langle u , \psi_i \rangle e^{-\lambda_i^n} \psi_i^n.\] 
where $k_n =  \lceil \frac{c}{\veps_n^m} \rceil$ for some small enough constant $c>0$.
Using \eqref{ConvRates}, \eqref{ConvEigenvalue} and \eqref{ConvEigenfunc} it is straightforward to show that
\[ \F_n(P_n u)_{k_n} \converges{TL^2} \F(u).  \] 
On the other hand, 
\begin{align}
\begin{split}
\label{AuxLimsup3}
\lVert \F(P_n u) - \F(P_n u)_{k_n}  \rVert^2_{L^2(\gamma_n)}   & \leq \sum_{i=k_n + 1}^n \left(\frac{\alpha + \lambda_i}{\alpha + \lambda_i^n}\right)^{s/2}\langle  u , \psi_i \rangle^2_{L^2(\gamma)} e^{-2 \lambda_i^n}
\\ & \leq C e^{- 2 \lambda_{k_n}^n}n^{1+s/2m}\lVert u \rVert^2_{L^2(\gamma)}   
\\ & \leq  C \lVert u \rVert_{L^2(\gamma)}^2,
\end{split}
\end{align}
where $C$ is a large enough constant. Since the second to last term in \eqref{AuxLimsup3} converges to zero as $n \rightarrow \infty$, we deduce that
\[ \F_n(P_n u) \converges{TL^2} \F(u), \]
and hence
\[ \lim_{n \rightarrow \infty}\phi\bigl(\O_n \circ \F_n (P_n u) ; y \bigr) = \phi \bigl(\O \circ \F (u) ; y\bigr).  \]
To finish the proof we notice that it is possible to find a large enough constant $C>0$ such that, for all $n \in \N$ and all $u \in L^2(\gamma),$ 
\begin{align*}
\begin{split}
 \phi(\O_n \circ \F_n (P_n u) ; y ) & \leq C \lVert \F_n (P_n u) \rVert^2_{L^2(\gamma_n)} 
\\& = C \lVert \F_n (P_n u)_{k_n} \rVert_{L^2(\gamma_n)}^2 + C \lVert\F_n (P_n u)_{k_n} - \F_n (P_n u)  \rVert_{L^2(\gamma_n)}^2
\\& \leq C \lVert u \rVert^2. 
\end{split}
\end{align*}
Indeed, to obtain the last inequality we have used \eqref{AuxLimsup3} to control the term  $\lVert\F_n (P_n u)_{k_n} - \F_n (P_n u)  \rVert_{L^2(\gamma_n)}^2$; the term  
$\lVert\F_n (P_n u)_{k_n}  \rVert_{L^2(\gamma_n)}^2$ is controlled using the fact that the terms
\[ \frac{\alpha + \lambda_i}{\alpha + \lambda_i^n}, \quad i=1, \dots, k_n \]
are uniformly controlled due to \eqref{ConvRates}. Since the function $u \mapsto \lVert u \rVert^2_{L^2(\gamma)}$ is integrable with respect to $\boldsymbol{\nu}$, the dominated convergence theorem can be applied and \eqref{AuxLimsup1} now follows. 

\qed

  \bibliographystyle{plain}
\bibliography{isbib}

\begin{thebibliography}{10}

\bibitem{agapiou2015importance}
S.~Agapiou, O.~Papaspiliopoulos, D.~Sanz-Alonso, and A.~M. Stuart.
\newblock Importance sampling: computational complexity and intrinsic
  dimension.
\newblock {\em Statistical Science, to appear. arXiv preprint
  arXiv:1511.06196}, 2015.

\bibitem{bertozziuncertainty}
A.~L. Bertozzi, X.~Luo, A.~M. Stuart, and K.~C. Zygalakis.
\newblock Uncertainty quantification in the classification of high dimensional
  data.

\bibitem{biegler2011large}
L.~Biegler, G.~Biros, O.~Ghattas, M.~Heinkenschloss, D.~Keyes, B.~Mallick,
  L.~Tenorio, B.~van Bloemen~Waanders, K.~Willcox, and Y.~Marzouk.
\newblock {\em Large-scale inverse problems and quantification of uncertainty},
  volume 712.
\newblock John Wiley \& Sons, 2011.

\bibitem{burago2014graph}
D.~Burago, S.~Ivanov, and Y.~Kurylev.
\newblock {A graph discretization of the Laplace-Beltrami operator}.
\newblock {\em J. Spectr. Theory}, 4:675–714, 2014.

\bibitem{cotter2009bayesian}
S.~L. Cotter, M.~Dashti, J.~C. Robinson, and A.~M. Stuart.
\newblock Bayesian inverse problems for functions and applications to fluid
  mechanics.
\newblock {\em Inverse problems}, 25(11):115008, 2009.

\bibitem{dal2012introduction}
G.~Dal~Maso.
\newblock {\em {An Introduction to $\Gamma$-Convergence}}, volume~8.
\newblock Springer Science \& Business Media, 2012.

\bibitem{dashti2011uncertainty}
M.~Dashti and A.~M. Stuart.
\newblock Uncertainty quantification and weak approximation of an elliptic
  inverse problem.
\newblock {\em SIAM Journal on Numerical Analysis}, 49(6):2524--2542, 2011.

\bibitem{dupuis2011weak}
P.~Dupuis and R.~S. Ellis.
\newblock {\em A weak convergence approach to the theory of large deviations},
  volume 902.
\newblock John Wiley \& Sons, 2011.

\bibitem{fatou}
E.~A. Feinberg, P.~O. Kasyanov, and N.~V. Zadoianchuk.
\newblock Fatou's lemma for weakly converging probabilities.
\newblock {\em Theory of Probability \& Its Applications}, 58(4):683--689,
  2014.

\bibitem{SpecRatesTrillos}
N.~Garc{\'\i}a~Trillos, M.~Gerlach, M.~Hein, and D.~Slep{\v{c}}ev.
\newblock {Spectral convergence of empirical graph Laplacians}.
\newblock {\em In preparation}.

\bibitem{trillossanzflows}
N.~Garc{\'\i}a~Trillos and D.~Sanz-Alonso.
\newblock {Gradient Flows: Applications to Classification, Image Denoising, and
  Riemannian MCMC}.
\newblock {\em arXiv preprint arXiv:1705.07382}, 2017.

\bibitem{trillos2016bayesian}
N.~Garc{\'\i}a~Trillos and D.~Sanz-Alonso.
\newblock {The Bayesian formulation and well-posedness of fractional elliptic
  inverse problems}.
\newblock {\em Inverse Problems}, 33(6):065006, 2017.

\bibitem{trillos2014canadian}
N.~Garc{\'\i}a~Trillos and D.~Slep{\v{c}}ev.
\newblock On the rate of convergence of empirical measures in
  $\infty$-transportation distance.
\newblock {\em Canadian Journal of Mathematics}, 67:1358--1383, 2014.

\bibitem{trillos}
N.~Garc{\'\i}a~Trillos and D.~Slep{\v{c}}ev.
\newblock Continuum limit of total variation on point clouds.
\newblock {\em Archive for rational mechanics and analysis}, 220(1):193--241,
  2016.

\bibitem{trillosACHA}
N.~Garc{\'\i}a~Trillos and D.~Slep{\v{c}}ev.
\newblock A variational approach to the consistency of spectral clustering.
\newblock {\em Applied and Computational Harmonic Analysis}, 2016.

\bibitem{gigli2010heat}
N.~Gigli.
\newblock On the heat flow on metric measure spaces: existence, uniqueness and
  stability.
\newblock {\em Calculus of Variations and Partial Differential Equations},
  39(1):101--120, 2010.

\bibitem{meyer1966probability}
P.~A. Meyer.
\newblock {\em {Probability and Potentials}}, volume 357.
\newblock Blaisdell Waltham, 1966.

\bibitem{serfaty2011gamma}
S.~Serfaty.
\newblock {Gamma-convergence of gradient flows on Hilbert and metric spaces and
  applications}.
\newblock {\em Discrete Contin. Dyn. Syst}, 31(4):1427--1451, 2011.

\bibitem{AS10}
A.~M. Stuart.
\newblock Inverse problems: a {B}ayesian perspective.
\newblock {\em Acta Numerica}, 19:451--559, 2010.

\bibitem{von2007tutorial}
U.~Von~Luxburg.
\newblock A tutorial on spectral clustering.
\newblock {\em Statistics and computing}, 17(4):395--416, 2007.

\bibitem{duke}
A.~Zellner.
\newblock {Optimal information processing and Bayes's theorem}.
\newblock {\em The American Statistician}, 42(4):278--280, 1988.

\end{thebibliography}

\end{document}